\pgfplotsset{compat=1.6}
\theoremstyle{plain}%
\newtheorem{theorem}{Theorem}[section]
\newtheorem{lemma}[theorem]{Lemma}
\newtheorem{proposition}[theorem]{Proposition}
\newtheorem*{conjecture*}{Conjecture}
 \numberwithin{equation}{section}
\theoremstyle{definition}
\newtheorem{definition}[theorem]{Definition}
\theoremstyle{remark}
\newtheorem{remark}[theorem]{Remark}
 \let \leq \leqslant
 \let \geq \geqslant
\DeclareMathOperator{\tr}{tr}
\DeclareMathOperator{\real}{Re}
\DeclareMathOperator{\support}{supp}
\DeclareMathOperator{\dist}{dist} 
\DeclareMathOperator*{\argmin}{arg\, min}
\definecolor{detailcolor00}{rgb}{0.4405, 0.204, 0.343}
\definecolor{detailcolor01}{rgb}{0.546, 0.215, 0.352}
\definecolor{detailcolor02}{rgb}{0.675, 0.247, 0.387} 
\definecolor{detailcolor03}{rgb}{0.775, 0.317, 0.455}
\definecolor{detailcolor04}{rgb}{0.830, 0.421, 0.553} 
\definecolor{detailcolor05}{rgb}{0.831, 0.533, 0.663}
\definecolor{detailcolor06}{rgb}{0.779, 0.619, 0.775}
\definecolor{detailcolor07}{rgb}{0.724, 0.694, 0.827}
\definecolor{detailcolor08}{rgb}{0.687, 0.770, 0.880}
\definecolor{detailcolor09}{rgb}{0.671, 0.839, 0.904}
\definecolor{detailcolor10}{rgb}{0.659, 0.872, 0.882}
\newcommand\pig[1]{\scalerel*[5.5pt]{\Big#1}{%
  \ensurestackMath{\addstackgap[1.5pt]{\big#1}}}}
\newcommand\pigl[1]{\mathopen{\pig{#1}}}
\newcommand\pigr[1]{\mathclose{\pig{#1}}}
\title{Pure perimeter laws for Wilson lines observables}
\author{Malin P. Forsstr\"om\thanks{Mathematical Sciences, Chalmers University of Technology and University of Gothenburg, SE-412 96 Gothenburg, Sweden } \mbox{}\footnote{palo@chalmers.se}}
\begin{document}

\maketitle 

\begin{abstract}
	Several recent papers have studied the decay rate of the expectation of Wilson loop and Wilson line observables in lattice gauge theory and the lattice Higgs model. These results have all been perturbative in the sense that the parameters need to scale with the length of the loop or line for the error term to be smaller than the estimate. In this paper, we further develop ideas from~\cite{fv2023} and~\cite{f2024} to give more detailed asymptotics for the expectation of Wilson loop and Wilson line observables, which do not require the parameters to be very large or very small for the error term to be small, thus improving the results of~\cite{flv2022, flv2023, flv2020, f2022b}. In particular, we show that Wilson line and loop observables have a pure perimeter law in the Higgs and confinement phases of the lattice Higgs model.
\end{abstract}


\section{Introduction}
 
Lattice gauge theories are a family of spin models defined on the edges of hypercubic lattices with interaction between edges in the boundary of some common plaquette. 
The lattice Higgs model is a lattice gauge theory coupled to an external field that is a simple model of a Higgs field. In the simplest case, the model is described by the Hamiltonian
\begin{equation}\label{eq: Hamiltonian}
	\beta \sum_{p \in C_2(B_N)}   \rho_G(d\sigma(p)) + \kappa \sum_{e \in C_1(B_N)}   \tr \rho_G(\sigma(e)) \rho_H(d\phi(e))^{-1}
\end{equation}
together with a uniform reference measure. 
Here \( \beta,\kappa \geq 0 \) are two parameters,  \( B_N \subseteq \mathbb{Z}^d \) is a box of side length \( N, \) \( C_1(B_N) \) is the set of all oriented edges in \( B_N \), \( C_2(B_N) \) is the set of oriented plaquettes in \( B_N \), \( G \) and \( H \) are Lie groups, \( \sigma \in \Omega_1(B_N,G) \) is a \( G \)-valued 1-form,  \( \phi \in \Omega_0(B_N,H) \) is a \( H \)-valued 0-form,  \ and \( \rho_G \) and \( \rho_H \) are one-dimensional representations of \( G \) and \( H \) respectively. We let \( \mu_{\beta,\kappa,N} \) be the corresponding measure, and let \( \mathbb{E}_{N,\beta,\kappa} \) be the corresponding expectation.
Elements in \( \sigma \in \Omega_1(B_N,G) \) will be referred to as \emph{gauge field configurations}, and elements in \( \phi \in \Omega_0(B_N,H) \) will be referred to as \emph{Higgs field configurations}. We think of the pair \( (\sigma , \phi )\) as describing spin configurations where a group element \( \sigma(e) \in G \) is attached to each edge \( e\) of the lattice and a group element \( \phi(v) \in H \) is attached to each vertex of the lattice. The measure described by~\eqref{eq: Hamiltonian}  thus gives a probability measure on such spin configurations. 
The simplest example of a lattice Higgs model is obtained by letting \( G=H=\mathbb{Z}_2, \) and this model will be referred to as the Ising lattice Higgs model.

We note that in the physics literature, one always has \( G=H , \) while the more general setting here is considered in, e.g., \cite{a2021}. Moreover, the case \( G \neq H \) can also be considered a toy model for the abelian lattice Higgs model with charge, see, e.g., \cite{ghms2011,fs1979}.

Two natural observables for lattice gauge theories and lattice Higgs models are Wilson loops and Wilson lines. Given a path \( \gamma \) of oriented edges, the Wilson line observable associated to \( \gamma \) is defined by
\begin{equation}
	W_\gamma \coloneqq  \rho_G(\sigma(\gamma)) \rho_H (\phi(\partial \gamma))=  \prod_{e \in \gamma} \rho_G(\sigma(e)) \prod_{v \in \partial \gamma} \rho_H(\phi(v)).
\end{equation}
If \( \gamma \) is a closed loop,  the Wilson line observable is also referred to as a Wilson loop observable, and in this case, we have
 \begin{equation}
	W_\gamma \coloneqq  \rho_G(\sigma(\gamma)) =  \prod_{e \in \gamma} \rho_G(\sigma(e)) .
\end{equation}

By the Ginibre inequality, for any Wilson line or Wilson loop observable \( W_\gamma, \) the limit \( \langle W_\gamma \rangle_{\beta,\kappa} \) of \( \mathbb{E}_{N,\beta,\kappa}[W_\gamma] \) as \( N \to \infty \) exists and is translation invariant (see, e.g.,~\cite{flv2020}.

In this paper, we will make the additional assumption that  \( G \) and \( H \) are cyclic groups, i.e., that there is \( m,n \geq 0 \) finite such that \( G = \mathbb{Z}_m \) and \( H = \mathbb{Z}_n, \) and in this case we can assume that the representations are the one-dimensional representations \( j \mapsto e^{ij\pi}. \)
In this case, since~\eqref{eq: Hamiltonian} describes a spin model with block spins that take values in a finite group, the GHS inequalities hold, and hence by standard arguments (see, e.g., \cite{fv2017}) a limiting probability measure on \( \mathbb{Z}^d \) exists. This will let us transfer results from a finite lattice to an infinite lattice.

Wilson loop and Wilson lines observables are known to undergo phase transitions and can thus be used as order observables. In particular, when \( \kappa = 0, \) the expected value of Wilson loop observables are known to obey a perimeter law for large~\( \beta\) and an area law for small~\( \beta. \) In other words, there are constants \( C_\beta \) and \( C_\beta' \) such that for sufficiently large~\( \beta, \) \( \langle W_\gamma \rangle_{\beta,0} \sim e^{-C_\beta |\gamma|},\) and for sufficiently small \( \beta, \) \( \langle W_\gamma \rangle_{\beta,0}  \sim e^{-C_\beta' \mathrm{area}(\gamma)}, \) where \( \mathrm{area}(\gamma) \) is the cardinality of the smallest set of plaquettes that has the loop \( \gamma \) as its boundary. In contrast, when \( \kappa >0, \) Wilson loop observables are known always to obey a perimeter law (see, e.g., Theorem~\ref{theorem: general perimeter} below). When \( \kappa=0, \) the expectation of Wilson line observables corresponding to open paths is always zero. However, when \( \kappa>0, \) the expected value of Wilson line observables is non-trivial, and Wilson line observables are known to undergo phase transitions when \( \kappa>0 \) (see, e.g.,~\cite{bf1983} and also~\cite{f2024}), which we now describe.
To this end, let \( \gamma_n \) be a straight path of length \( n \) and let the parameters \( \beta,\kappa \geq 0 \) be fixed.
We say that the model has \emph{perimeter law decay} if the limit 
\[
 	a_{\beta,\kappa} \coloneqq \lim_{n \to \infty }\frac{-\log \langle W_{\gamma_n}\rangle_{\beta,\kappa}}{|\gamma_n|}
 \]
exists and is strictly positive. 
The model is said to have \emph{pure perimeter law decay} if the limit 
\begin{equation}\label{eq: Cbetakappa}
 	C_{\beta,\kappa} \coloneqq \lim_{n \to \infty } -\log \langle W_{\gamma_n}\rangle_{\beta,\kappa}-a_{\beta,\kappa}|\gamma_n| 
\end{equation} 
exists and is strictly positive. In contrast, the model is said to have \emph{perimeter law decay with lower-order corrections} if the limit in~\eqref{eq: Cbetakappa} is zero and there is a function \( g \colon \mathbb{N}\to \mathbb{R} \) such that \( \lim_{n \to \infty} |g(n)| = \infty, \) \( \lim_{n\to \infty} g(n)/n = 0 , \) and the limit
\[
\lim_{n \to \infty} -\log \langle W_{\gamma_n} \rangle_{\beta,\kappa} -a_{\beta,\kappa} |\gamma_n| - g(|\gamma_n|) 
\]
exists.
The model is said to have \emph{perimeter law decay with polynomial corrections} if there is a polynomial \( p(x) \) such that \( g(|\gamma|_n) =  \log p(|\gamma_n|). \) In this case, we have 
\[
\langle W_{\gamma_n} \rangle_{\beta,\kappa} \sim \frac{e^{-a_{\beta,\kappa} |\gamma_n|}}{p(|\gamma_n|)}.
\]
This type of decay is known as \emph{Ornstein-Zernike decay} and holds, e.g., in the high-temperature regime of the Ising model (see, e.g.,~\cite{pl1978,civ2003}).
 
The lattice Higgs model, with \( G \) and \( H \) finite, is believed to have pure perimeter law decay in some parts of the phase diagram, referred to as the \emph{Higgs/confinement regime}, and perimeter law decay with polynomial corrections in other parts of the phase diagram, referred to as the \emph{free phase} (see Figure~\ref{fig: phase diagram})~\cite{c1980,fs1979,ghms2011}. (See also~\cite{abp2025,ghms2011} for a discussion about potential differences between the Higgs and confinement regime.) The main contribution of this paper is to verify that the model indeed has pure perimeter law decay in the Higgs and confinement regimes, thus verifying some parts of this picture.

\begin{figure}[tb]\centering
		\begin{tikzpicture}[scale=1.4] 
  		\begin{axis}[xmin=-0.05,xmax=0.8,ymin=-0.05,ymax=0.8,xticklabel=\empty,yticklabel=\empty,axis lines = middle,xlabel={\footnotesize $\beta$},ylabel={\footnotesize $\kappa$},label style =
               {at={(ticklabel cs:1.08)}}, color=detailcolor07, ticks=none]

               \addplot[domain=0:0.32, samples=100, color=detailcolor00, dashed] {0.61-2*x^1.8};
               
               \addplot[domain=0.32:0.42, samples=100, color=detailcolor00] {0.61-2*x^1.8};
               
               \addplot[domain=0.42:0.44, samples=100, color=detailcolor00]{0.61-2*0.42^1.8-100*(x-0.42)^1.6};
               \addplot[domain=0.42:0.9, samples=100, color=detailcolor00]{0.61-2*0.42^1.8-0.1*(x-0.42)+0.14*(x-0.42)^2};
  		\end{axis}
  		
  		\draw (1.9,1.9) node[align=center] {\footnotesize Confinement phase \\ \scriptsize pure perimeter law decay};
  		\draw (5.3,0.9) node[align=center] {\footnotesize Free phase \\ \scriptsize perimeter law decay with \\[-1.1ex] \scriptsize polynomial corrections};
  		\draw (4.2,3.7) node[align=center] {\footnotesize Higgs phase \\ \scriptsize pure perimeter law decay};

	\end{tikzpicture}

	\caption{The phase diagram of the Ising lattice Higgs model as described in the physics literature~\cite{c1980,fs1979,ghms2011}. In the Higgs and confinement phases, the expectation of Wilson line observables is believed to have pure perimeter law decay. In contrast, in the free phase, one expects the expectation of Wilson line observables to have perimeter law decay with polynomial corrections.}\label{fig: phase diagram}
\end{figure}
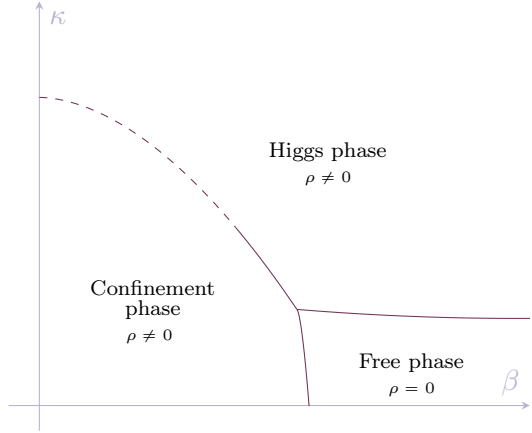
 
\subsection{Results}

In this section, we present our main results. Our first result shows that Wilson line and loop observables obey a perimeter law for all cyclic groups \( G \) and \( H \) when \( \kappa >0. \) In particular, it never has a so-called area law decay, which is known to hold for Wilson loop observables when \( \kappa = 0 \) and \( \beta \) is sufficiently small.

\begin{theorem}\label{theorem: general perimeter}
	For \( m,n \geq 2, \) let \( G = \mathbb{Z}_m \) and \( H = \mathbb{Z}_n. \) Further, let \( \beta > 0 \) and \( \kappa > 0. \) Then the following holds. 
	\begin{enumerate}[label=(\roman*)]
		\item There is a constant \(  b_{\beta,\kappa }  \in (0,\infty) \) (which depend on \( m \) and \( n \)) such that for any path \( \gamma, \)
	\begin{equation*}
		e^{-b_{\beta,\kappa}|\gamma|} \leq \langle W_\gamma\rangle_{\beta,\kappa} \leq 1.
	\end{equation*}\label{item: general perimeter a}
	\item Let \( a,b \in \mathbb{N}. \) If, for each \( j \geq 1, \)  \( \gamma_j \) is a straight path of length \( j \) or a rectangular loop with side lengths \( aj \) and \( bj ,\) then \( \lim_{j \to \infty} \frac{-\log \langle W_{\gamma_j} \rangle_{\beta,\kappa}}{|\gamma_j|} \in (0,\infty)\) exists.\label{item: general perimeter b}
	\end{enumerate} 
\end{theorem}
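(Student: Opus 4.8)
The plan is to exploit the fact that for cyclic groups $G=\mathbb{Z}_m$, $H=\mathbb{Z}_n$ the representations $\rho$ are one-dimensional characters, so that the Hamiltonian and the Wilson observable are functions of $\mathbb{Z}_m$- and $\mathbb{Z}_n$-valued fields that can be analyzed via a Fourier/high-temperature-type expansion in the spirit of~\cite{fv2023,f2024}. I would begin by passing to the infinite-volume limit (legitimate by the Ginibre and GHS inequalities quoted above) and by gauge-fixing in order to reduce the Higgs field: a standard change of variables $\sigma(e)\mapsto \sigma(e)\phi(\partial e)$ along the edges of a spanning tree turns the coupling term $\sum_e \tr\rho(\sigma(e))\rho(d\phi(e))^{-1}$ into a decoupled external-field term $\kappa\sum_e \tr\rho(\sigma(e))$ on the complement of the tree, so that the model becomes a pure $\mathbb{Z}_m$ gauge theory in a (spatially homogeneous) external field of strength $\kappa>0$. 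The Wilson line observable $W_{\gamma_n}=\rho(\sigma(\gamma))\rho(\phi(\partial\gamma))$ then becomes, after the same change of variables, a product of single-edge spins $\prod_{e\in\gamma}\rho(\sigma(e))$ times a boundary correction, which is the quantity whose decay we must control.

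For part~\ref{item: general perimeter a}, the lower bound $\langle W_\gamma\rangle_{\beta,\kappa}\ge e^{-b_{\beta,\kappa}|\gamma|}$ I would obtain from a reflection-positivity or Griffiths-type correlation inequality together with a simple single-site lower bound: conditioning on the $\kappa$-term being satisfied along $\gamma$ (an event of probability bounded below by $c^{|\gamma|}$ for some $c=c(\beta,\kappa)>0$, uniformly in $N$) forces $W_\gamma$ close to $1$; more precisely, one compares $\langle W_\gamma\rangle$ to the ratio of partition functions with and without the constraint that the variables along $\gamma$ take their ground-state value, and each edge contributes a bounded-below factor because $\kappa>0$. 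The upper bound $\langle W_\gamma\rangle_{\beta,\kappa}\le e^{-b'_{\beta,\kappa}|\gamma|}$ is the substantive direction: since the character $\rho(\sigma(e))$ has mean strictly less than $1$ in absolute value under the single-edge marginal (because $\kappa<\infty$), one expects a ``decoupling'' estimate showing that $\mathbb{E}[\prod_{e\in\gamma}\rho(\sigma(e))]$ factorizes up to exponentially small corrections. I would get this from the chessboard/reflection-positivity estimate or, more robustly, from the polymer/cluster expansion of~\cite{f2024}: the $\kappa$ external field provides mass for the gauge field, and summing over the defect configurations contributing to $W_\gamma$ yields a geometric factor per unit length.

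For part~\ref{item: general perimeter b}, existence of the limit $\lim_n -\log\langle W_{\gamma_n}\rangle/|\gamma_n|$ I would establish by a subadditivity argument. For the straight path, concatenating two straight segments of lengths $n_1$ and $n_2$ and using a Griffiths-type inequality of the form $\langle W_{\gamma_{n_1+n_2}}\rangle \ge \langle W_{\gamma_{n_1}}\rangle\langle W_{\gamma_{n_2}}\rangle$ (valid after gauge-fixing, since the characters are non-negative-definite functions and the measure is of Ginibre type) gives that $-\log\langle W_{\gamma_n}\rangle$ is subadditive in $n$; Fekete's lemma then yields the limit, and positivity/finiteness of the limit follows from the two-sided bound in~\ref{item: general perimeter a}. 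For the rectangular loop of side lengths $an,bn$ the perimeter is $2(a+b)n$, and one reduces to the straight-path case by the same concatenation along each of the four sides together with a corner estimate that costs only an $O(1)$ factor, independent of $n$; alternatively one applies subadditivity in $n$ directly to the sequence of similar rectangles. The main obstacle I anticipate is establishing the correlation inequality $\langle W_{\gamma_{n_1+n_2}}\rangle\ge\langle W_{\gamma_{n_1}}\rangle\langle W_{\gamma_{n_2}}\rangle$ in a form valid for all cyclic $G,H$ and the precise exponential upper bound in~\ref{item: general perimeter a} with a constant $b'_{\beta,\kappa}$ that does not degenerate; both hinge on controlling the gauge degrees of freedom transverse to $\gamma$, and the cleanest route is likely the duality/expansion machinery already developed in~\cite{f2024} rather than a soft inequality argument.
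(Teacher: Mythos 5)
Your part (ii) is essentially the paper's own argument: Griffiths' second inequality gives \( \langle W_{\gamma+\gamma'}\rangle_{\beta,\kappa}\ge\langle W_{\gamma}\rangle_{\beta,\kappa}\langle W_{\gamma'}\rangle_{\beta,\kappa} \), hence super/sub-additivity of \( -\log\langle W_{\gamma_n}\rangle_{\beta,\kappa} \), and Fekete plus the two-sided bound of part (i) gives existence, finiteness and positivity of the limit; your lower bound in (i) (a bounded-below factor per edge of \( \gamma \), or equivalently \( \langle W_{\gamma}\rangle\ge\langle W_{\gamma_1}\rangle^{|\gamma|} \)) is also close in spirit to the paper, which extracts the per-edge factor from the two-field high-temperature expansion of Proposition~\ref{proposition: high high mn}. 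However, there are two genuine problems. First, your opening reduction--absorbing the Higgs field by \( \sigma(e)\mapsto\sigma(e)\phi(\partial e) \) along a spanning tree so that the coupling becomes a homogeneous external field--is not available for general \( m\neq n \): \( d\phi \) is \( \mathbb{Z}_n \)-valued while \( \sigma \) is \( \mathbb{Z}_m \)-valued, so the shift only makes sense when the image of \( \rho|_H \) lies inside the image of \( \rho|_G \) (essentially the case \( G=H \), which is the only case where the paper uses unitary gauge). The paper's Proposition~\ref{proposition: high high mn} is set up precisely to avoid this step, performing the expansion directly on the two-field model, and your part (i) would have to be rerun in such a gauge-free framework.

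Second, and more seriously, the exponential upper bound in (i)--which you yourself single out as the substantive direction--is not actually proved, and the tool you lean on cannot give it in the stated generality. The cluster/polymer expansion of~\cite{f2024} converges only for \( \kappa>\kappa_0^{(\mathrm{Higgs})} \) (or, after a high-temperature expansion, for \( \beta<\beta_0^{(\text{conf})} \)); Theorem~\ref{theorem: general perimeter} is asserted for every \( \beta,\kappa>0 \), including the free phase where neither expansion applies, and removing exactly this kind of parameter restriction is the point of the statement. The chessboard/reflection-positivity alternative you mention could in principle produce a non-perturbative per-edge upper bound, but you neither set it up nor show that the relevant single-edge quantity is strictly below \( 1 \) uniformly, and the observation that the single-edge character has expectation of modulus less than one does not by itself ``decouple'' the edges of \( \gamma \). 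By contrast, the paper handles all of part (i) with only the high-temperature expansion and the Griffiths/monotonicity lemma (Lemma~\ref{lemma: increasing}), expressing both constants through explicit single-edge quantities such as \( \min_j\bar\varphi_{\kappa,m\lor n}(j)/\max_j\bar\varphi_{\kappa,m\lor n}(j) \) and \( \langle W_{\gamma_1}\rangle_{\beta,\kappa} \), with no cluster expansion or reflection positivity anywhere. So as written your proposal yields the lower bound in (i) and, conditionally on (i), part (ii), but leaves the upper bound unestablished.
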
 

The assumption on the shape of \( \gamma_j \) in Theorem~\ref{theorem: general perimeter}\ref{item: general perimeter b} is not strictly necessary. Rather, it is sufficient to have a sequence of paths that is increasing in length while having the same shape in some sense, and the proof can easily be adapted to this setting.

The proof of Theorem~\ref{theorem: general perimeter} can without much work be extended to finite abelian groups, and we outline such a proof in Appendix~\ref{appendix: general theory}.
However, since the proof of this result use Griffiths second inequality, a proof for general finite groups would require a different approach.

We now present our main results for the Higgs and confinement phases. These results state that in both regimes, Wilson line and Wilson loop observables corresponding to straight lines and rectangular loops have a pure perimeter law.
In both theorems below, the constants \( a_{\beta,\kappa} ,C_{\beta,\kappa} ,D_{\beta,\kappa}, D_{\beta,\kappa}'\) that appear are given by concrete expansions in the corresponding proofs.

\begin{theorem}\label{theorem: Higgs phase}
	Let \( G=H=\mathbb{Z}_2, \) and for each \( n \geq 1, \) let \( \gamma_n \) be a straight path of length~\( n \) or a rectangular loop with side lengths \( an \) and \( bn \) for some \( a,b \in \mathbb{N}. \)
	Then there is \( \kappa_0^{(Higgs)}>0 \), independent of~\( a,b \) and~\( n \), such that if \( \beta > 0 \) and  \( \kappa > \kappa_0^{(\textrm{Higgs})} \) 
	\begin{equation}\label{eq: inequality Higgs}
		\bigl| -\log \langle W_{\gamma_n} \rangle_{\beta,\kappa}  - a_{\beta,\kappa} |\gamma_n| - C_{\beta,\kappa} \bigr| 
		\leq D_{\beta,\kappa}  e^{-D_{\beta,\kappa}' |\gamma_n| } .
	\end{equation}  
	for some constants \( C_{\beta,\kappa}, a_{\beta,\kappa}, D_{\beta,\kappa},D_{\beta,\kappa}' >0.\) 
\end{theorem}

\begin{theorem}\label{theorem: main confinement}
	Let \( G=H=\mathbb{Z}_2, \) and for each \( n \geq 1, \) let \( \gamma_n \) be a straight path of length \( n \) or a rectangular loop with side lengths \( an \) and \( bn \) for some \( a,b \in \mathbb{N}. \)
	Then there is~\(\beta_0^{(\text{conf})}>0\), independent of~\( a,b \) and~\( n \), such that if \( 0 < \beta < \beta_0^{(\text{conf})} \) and  \(  \kappa >0, \) then
 	\begin{equation}\label{eq: inequality confinement}
 		\bigl| -\log \langle W_{\gamma_n} \rangle - a_{\beta,\kappa} |\gamma_n| - C_{\beta,\kappa} \bigr| \leq D_{\beta,\kappa}e^{-D_{\beta,\kappa}' |\gamma_n|} .
	\end{equation}
	for some constants \( C_{\beta,\kappa}, a_{\beta,\kappa}, D_{\beta,\kappa},D_{\beta,\kappa}' >0.\)
\end{theorem}

\begin{remark}
	In the settings of Theorem~\ref{theorem: Higgs phase} and Theorem~\ref{theorem: main confinement}, it follows from~\eqref{eq: inequality Higgs} and~\eqref{eq: inequality confinement} respectively that
	\[
 		\bigl| \langle W_{\gamma_n} \rangle - e^{-a_{\beta,\kappa} |\gamma_n| - C_{\beta,\kappa}} \bigr| \leq Ce^{-C' |\gamma_n|+ Ce^{-C' |\gamma_n|}}e^{-a_{\beta,\kappa} |\gamma_n| + C_{\beta,\kappa}}.
	\]
	This upper bound holds throughout the Higgs and confinement regimes and does not require the constants \( \beta \) and \( \kappa \) to tend to infinity or zero as \( |\gamma| \to \infty. \) Theorem~\ref{theorem: Higgs phase} thus greatly improves~\cite[Theorem 1.1]{f2022b}, while Theorem~\ref{theorem: main confinement} is a substantial strengthening of~\cite[Theorem 1.3]{flv2022}. Moreover, the proofs can easily be modified to give the analog results for connected paths along the boundary of rectangles, and in this case, substantially strengthen the main results of and the case \( G=H=\mathbb{Z}_2 \) in~\cite{flv2023}. Moreover, with slightly more work, the results of this paper can also be extended to finite abelian groups with \( G=H\) (see Appendix~\ref{appendix: general theory}, thus improving the main results of~\cite{flv2020}. In detail, for Theorem~\ref{theorem: Higgs phase}, the proof extends immediately to this case, and for Theorem~\ref{theorem: main confinement}, as for Theorem~\ref{theorem: general perimeter}, the proof extends immediately if one first obtains a high temperature expansion for this general setting. 
	Finally, we mention that using the cluster expansion of~\cite{fv2023}, the methods developed in this paper can also be used to get the analog results in the pure model with \( \kappa = 0, \) which would then substantially strengthen the main results of~\cite{c2019,sc2019, flv2020} in the case \( G=\mathbb{Z}_2, \) by removing the assumption that \(\beta \to \infty \) with the length of the loop.
\end{remark}

\begin{remark}
	To obtain a scaling limit of lattice gauge theories where Wilson loop and line observables have non-trivial limits, one presumably needs to let either \( \beta \) or \( \kappa \) grow with the size of the loop or line considered. All of our results are also valid in this setting. However, in the physics literature, lattice gauge theories are well-studied models also away from this limit~\cite{d2025, fs1979, a1983, c1980, ghms2011, ssn2021}, and studying them in greater generality increases the understanding of other related models of statistical physics. From this point of view, taking \( \beta \) and \( |\gamma| \) large is insufficient to fully understand the model.
\end{remark}

\begin{remark}
	The thresholds \( \beta_0^{(conf)} \) and \( \kappa_0^{(Higgs)}\) for \( \beta \) and \( \kappa \) that appear in Theorem~\ref{theorem: Higgs phase} and Theorem~\ref{theorem: main confinement}, respectively, correspond to the largest and smallest values of the corresponding parameters respectively for which the cluster expansion is absolutely convergent, which in turn corresponds to the size of the polymers in the corresponding expansion having exponential decay (see also~\cite[Chapter 5]{fv2017}). This latter property is suggested as the driving force behind the model's phase transitions, see, e.g., \cite{ghms2011}. However, in both regimes of the model considered, this property should depend on both parameters. Since the thresholds used in the theorems here depend on only one parameter, they should not be sharp in general.
\end{remark}

\subsection{Outline of proof}\label{sec: proof idea}
The main ideas of the proofs of Theorem~\ref{theorem: Higgs phase} and Theorem~\ref{theorem: main confinement} are as follows. The first step is to express the logarithm of the expectation of a Wilson line observable in terms of a sum over clusters using a cluster expansion as in~\cite{f2024} (see also~\cite{fv2023}). In the confinement phase, to do this, we must first use a high-temperature expansion to translate the relevant observables to a related low-temperature model. In both cluster expansions, the logarithm of the Wilson line observable is affected only by clusters whose edge support intersects the corresponding path. 
The main contribution to the Wilson line (or loop) expectation comes from clusters that are small compared to the length of the path. From the perspective of a small cluster, most edges in the path look the same, and hence each type of cluster gives an equal contribution for each edge, except for edges that are close to the boundary (or corners) of the path. This gives a contribution of order \( |\gamma_n|.\) 
 After this contribution has been accounted for, the next order of contributions is caused by clusters that are sufficiently close to exactly one of the endpoints (or corners) of the path for their contribution to be affected by the endpoint. This gives a contribution of order one. Finally, the next-order contributions are given by clusters whose edge support is very close to both endpoints (or several of the corners) of the path. To achieve this, the cardinality of the support of the cluster has to be at least \( |\gamma_n|, \) and this gives a term that is exponentially small.

The papers~\cite{f2024,fv2023} used the first step of the outline above to conclude perimeter law, but did not include subsequent steps to be able to deduce the finer asymptotic behavior for obtained in this paper.

We mention that in the free phase, when \( \kappa  \) is small and \( \beta \) is large, the above proof idea fails. The reason for this is that the model obtained after performing a high-temperature expansion (in \( \kappa\) only) in this phase results in a model that is forced to contain a random path that connects the two endpoints of \( \gamma_n.\) Even a first-order estimate of \( \langle W_{\gamma_n}\rangle_{\beta,\kappa}\) requires a fine analysis of the geometry of this path.

\subsection{Relation to other work}

Several recent papers~\cite{flv2022, flv2023, flv2020, f2022b, c2019, sc2019, a2021} consider lattice gauge theories or lattice Higgs models with finite gauge groups. In all of these papers, the aim is to describe the leading order asymptotics of \( \langle W_{\gamma_n} \rangle_{\beta,\kappa} \) in a dilute gas limit, with \( \gamma_n \) being a loop or path along the boundary of a rectangle. A dilute gas limit means that the parameters \( \beta \) or \( \kappa \) are chosen such that they go to either zero or infinity as the length of the path goes to infinity in such a way that the density of plaquettes assigned a non-zero spin goes to zero with the length of the path. Such results are also said to be perturbative. In contrast, the main results of this paper do not require the parameters to be very small or very large compared to the length of the path in order to get a small error term. As a consequence, our results substantially strengthen the results of~\cite{flv2022, flv2023, flv2020, f2022b, c2019, sc2019, a2021} in the special case that \( G \) and \( H \) are both cyclic groups. In addition, the error term obtained by this technique is smaller than the error term obtained in these papers also in the dilute gas limit. While the methods of~\cite{flv2022, flv2023, flv2020, f2022b, c2019, sc2019, a2021} are relatively similar, the current paper uses very different techniques that seem more suitable in a non-dilute setting, and that in instead similar to the techniques employed in~\cite{fv2023}~and~\cite{f2024}.

In~\cite{fv2023}, non-perturbative results for the decay of Wilson loops in the low-temperature regime were found by using cluster expansions.
In~\cite{f2024}, these ideas were adapted to the abelian lattice Higgs model to show that the Marcu-Fredenhagen ratio, which is a limit of a ratio of Wilson line expectations whose length goes to infinity, undergoes a phase transition and can thus be used as an order parameter to distinguish between the phases of the lattice Higgs model. The ideas used in this paper are a further development of the ideas and techniques of~\cite{fv2023,f2024}.
We stress that the main contribution of these papers is not the cluster expansions, as these are special cases of the cluster expansion as presented in, e.g., \cite{fv2017}, and cluster expansions have been used to study the decay rate of Wilson loop observables also in the physics literature (see, e.g., \cite{s1980,d1973,b1984,bf1985}). 
In particular, in~\cite[Theorem 3.18]{s1980}, a sketch of how to use cluster expansions to obtain perimeter law is presented. However, we stress that our main result is not that Wilson loops obey a perimeter law but rather that in the Higgs/confinement regimes, there are no polynomial corrections to the perimeter law. In other words, our main result describes the behavior of the term remaining after the perimeter law term has been accounted for.

In~\cite{f2024}, we used cluster expansions to show that the Marcu-Fredenhagen ratio was non-zero in the Higgs/confinement regime. Moreover, in~\cite {fv2023}, we used cluster expansions to find an expansion for the counterpart of the constant \( a_{\beta,\kappa} \) in~\eqref{eq: inequality Higgs} and~\eqref{eq: inequality confinement} for the case \( \kappa = 0. \) In both these papers, the main contribution was new symmetry arguments applied to the terms in the cluster expansions which were needed to obtain the results (see also Section~\ref{sec: proof idea} below). 

However, the methods used there, applied to the setting of this paper, were too coarse to obtain the asymptotics of Theorem~\ref{theorem: Higgs phase} and~Theorem~\ref{theorem: main confinement}. The main contribution of this paper is a substantial refinement of these symmetry arguments, which, in addition to~\eqref{eq: inequality Higgs} and~\eqref{eq: inequality confinement}, also explain what the different order asymptotics are caused by. These methods could easily be adapted to yield even finer asymptotics.

\color{black}

\subsection{Structure of the paper}

In Section~\ref{section: preliminaries}, we collect the notation and results that we will need for the proofs of our main results. In particular, in Section~\ref{section: high temperature in both parameters}, we review the model arising from a high-temperature expansion of the lattice Higgs model, and in Section~\ref{section: cluster expansions}, we recall the cluster expansions from~\cite{f2024} which we will use.
In Section~\ref{section: general perimeter law}, we use the high-temperature expansion from Section~\ref{section: high temperature in both parameters} to give a proof of Theorem~\ref{theorem: general perimeter}.
Finally, in Section~\ref{section: proofs of the main results}, we give proofs of Theorem~\ref{theorem: Higgs phase} and Theorem~\ref{theorem: main confinement}.

\section{Preliminaries}\label{section: preliminaries}

\subsection{Discrete exterior calculus}

The following notation from discrete exterior calculus will be used throughout the paper. For a more thorough introduction to this notation, we refer the reader to~\cite{flv2023}.

For \( N \geq 1, \) we let \( B_N \subseteq \mathbb{Z}^d \) be a box of side length \( N. \)

\begin{itemize}
	\item For \( k \in \{ 0,1, \dots, d \}, \) we let \( C_k (B_N) \) denote the set of all oriented \( k \)-cells of \( B_N, \) and let \( C_k (B_N)^+ \) denote the set of all positively oriented cells in \( C_k (B_N) .\) In particular, \( C_1(B_N) \) is the set of oriented edges in \( B_N \) and \( C_2(B_N) \) is the set of oriented plaquettes in \( B_N .\)
	
	\item For \( k \in \{ 0,1, \dots, d \}, \) we let \( C^k(B_N,\mathbb{Z}) \) denote the set of all integer-valued \( k \)-chains on \( C_k(B_N). \) In  particular, when \( k = 1\), then this is the set of all formal sums of positively oriented edges with coefficients in \( \mathbb{Z}. \)
	
	\item For \( k \in \{ 0,1, \dots, d \}, \) we let \( \Omega_k(B_N,G) \) denote the set of all \( k \)-forms on \( C_k(B_N). \) When \( c \in C^k(B_N,\mathbb{Z}), \) and \( \omega \in \Omega_k(B_N,G), \) we write \( \omega(c) = \sum_{p \in c} c(p)\omega(p). \) 
	\item When \( k \in \{ 1,2,\dots,d \}, \) we let \( \partial c \) be the \( (k-1) \)-chain corresponding to the oriented boundary of \( c. \) For \( k \in \{ 0,1, \dots, d \} \) and \( c \in C_k(B_N), \) we let \( \hat \partial c \coloneqq \sum_{c' \colon c \in \partial c'} c' .\)
	
	\item We let \( d \) denote the discrete exterior derivative mapping \( \Omega_k(B_N,G) \) to \( \Omega_{k+1}(B_N,G), \) and let \( \delta \) be the co-derivative mapping \( \Omega_k(B_N,G) \) to \( \Omega_{k-1}(B_N,G).\)
	 
	\item When \( k \in \{ 0,1,\dots, n \} \) and \( \omega \in \Omega_k(B_N,G), \) we let \( \support \omega = \{ c \in C_k(B_N) \colon \omega(c) \neq 0  \}. \)
\end{itemize}

\subsection{Standing notation}

To simplify notation, for a path \( \gamma \in C^1(B_N,\mathbb{Z}) , \) we let
\begin{equation*}
	Z[\gamma] \coloneqq \sum_{\substack{\sigma\in \Omega_1(B_N,\mathbb{Z}_m)\\ \phi\in \Omega_0(B_N,\mathbb{Z}_n)}}  \rho_G(\sigma(\gamma)) \rho_H(\phi(\partial \gamma))^{-1}e^{\beta \sum_{p \in C_2(B_N)} \rho_G(d\sigma(p)) } e^{\kappa\sum_{e \in C_1(B_N)} \rho_G(\sigma(e))\rho_H(d\phi(e))^{-1}}.
\end{equation*}

Whenever the group is clear from the context, we will suppress the subscript on \( \rho\) to simplify notation.

\subsection{High temperature expansions}\label{section: high temperature in both parameters}

In this section, we present a high-temperature expansion which will be useful in the confinement phase. The high temperature expansion is a well-known tool in the literature, but we include it here for \( G= \mathbb{Z}_m \) and \( H = \mathbb{Z}_n \) for completeness. The proof is similar to the proof for the case \( G=H=\mathbb{Z}_n \) in~\cite[Proposition 4.1]{flv2022}, and in this case, we recover~\cite[Proposition 4.1]{f2022b}. However, even in this case, this proof is slightly different since it does not involve first passing to a unitary gauge (see Section~\ref{sec: unitary gauge}).

\subsubsection{The high temperature expansion when \( G=\mathbb{Z}_m \) and \( H = \mathbb{Z}_n \)}\label{sec: high temperature ZmZn}

The main purpose of this section is to present a high-temperature expansion of the lattice Higgs model for the case when the gauge group and the Higgs group are general cyclic groups (not necessarily the same).

For \( m,n\geq 2, \) we let \( m \lor n \) denote the smallest number that is divisible by both \( m \) and \( n. \)
For \( n \geq 2, \) \( i \in \mathbb{Z}_n \) and \( a \geq 0, \) we let
\[
	\varphi_{ a,n}(i)  \coloneqq \sum_{j=0}^\infty \frac{a^{nj+i}}{(nj+i)!}
\]
where we abuse notation by identifying \( \mathbb{Z}_n \) and \( \{ 0,1,\dots, n-1 \} \) in the natural way. 
Further, for \( j \in \mathbb{Z}_{n}, \) we let 
\begin{align*}
		 \bar \varphi_{a,n}(j)  \coloneqq \sum_{i,i' \in \mathbb{Z}_{n}\colon i-i'=j} \varphi_{a,n}(i)\varphi_{a,n}(i') = \sum_{i \in \mathbb{Z}_{n}} \varphi_{a,n}(i)\varphi_{a,n}(i-j).
\end{align*}

Note that for any \( j \neq 0, \) we have \( \lim_{a \to 0}  \bar \varphi_{a,n}(j) =0 \) and \( \bar\varphi_{a,n}(j) = \bar \varphi_{a,n}(n-j). \)

The next result, Proposition~\ref{proposition: high high mn} below, describes the high-temperature expansion that we will use throughout the paper to study the confinement regime.
\begin{proposition}\label{proposition: high high mn}
	Let \( m,n \geq 2, \) and let \( G = \mathbb{Z}_m \) and \( H = \mathbb{Z}_n.\) Further, let \( \beta,\kappa \geq 0,\)  let \( \gamma \in C^1(B_N,\mathbb{Z}) ,\) and let \( \eta_\gamma \in \Omega_1(B_N,\mathbb{Z}_{m\vee n}) \) be defined by \( \eta_\gamma(e) = \gamma[e] \mod m\vee n. \)  Then 
	\begin{equation*}\begin{split}
		Z[\gamma]  & \propto
		\sum_{\substack{\omega' \in  \Omega_2(B_N,\mathbb{Z}_{m\vee n}) \\ \omega \in  \Omega_2(B_N, \mathbb{Z}_m)}}
		\prod_{p \in C_2(B_N)^+} \bar \varphi_{\beta,m}(\omega(p)) \prod_{e \in C_1(B_N)^+} \bar \varphi_{\kappa,m\lor n}((\delta \omega'-\gamma)(e))  
		\\&\qquad\qquad\cdot \mathbf{1}(\delta(\omega' + \omega) =0 \pmod m)  ,
	\end{split}\end{equation*} 
	where the proportionality constant depends on \( m, \) \( n, \) and \( N ,\) but not on \( \gamma. \)
\end{proposition}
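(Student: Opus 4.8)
The plan is to run a high-temperature (character) expansion of $Z[\gamma]$, keeping careful track of the three moduli that appear: the gauge field is $\mathbb{Z}_m$-valued, the Higgs field is $\mathbb{Z}_n$-valued, and the matter coupling $\rho(\sigma(e))\rho(d\phi(e))^{-1}$ naturally takes values among the $(m\vee n)$-th roots of unity (since $m$ and $n$ both divide $m\vee n$). Writing the action in its real form --- so that a plaquette contributes a factor $e^{\beta(\zeta+\zeta^{-1})}$ with $\zeta=\rho(d\sigma(p))$ and an edge a factor $e^{\kappa(\zeta+\zeta^{-1})}$ with $\zeta=\rho(\sigma(e))\rho(d\phi(e))^{-1}$ --- the elementary identity
\[
	e^{a(\zeta+\zeta^{-1})} \;=\; \sum_{j\in\mathbb{Z}_k}\bar\varphi_{a,k}(j)\,\zeta^{\,j}\qquad(\zeta^{k}=1),
\]
which follows directly from the definitions of $\varphi_{a,k}$ and $\bar\varphi_{a,k}$, expresses the plaquette weight, as a function of $d\sigma(p)\in\mathbb{Z}_m$, as $\sum_{\omega(p)\in\mathbb{Z}_m}\bar\varphi_{\beta,m}(\omega(p))\,\rho(d\sigma(p))^{\omega(p)}$, and the edge weight, as a function of the $(m\vee n)$-th root of unity $\rho(\sigma(e))\rho(d\phi(e))^{-1}$, as $\sum_{\lambda(e)\in\mathbb{Z}_{m\vee n}}\bar\varphi_{\kappa,m\vee n}(\lambda(e))\,\bigl(\rho(\sigma(e))\rho(d\phi(e))^{-1}\bigr)^{\lambda(e)}$. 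Here $\omega\in\Omega^2(B_N,\mathbb{Z}_m)$ and $\lambda\in\Omega^1(B_N,\mathbb{Z}_{m\vee n})$ are auxiliary fields. Writing the observable factor $\rho(\sigma(\gamma))\rho(\phi(\partial\gamma))^{-1}$ as a product of the same characters along $\gamma$ and $\partial\gamma$, expanding all weights, and exchanging the (finite) sums turns $Z[\gamma]$ into $\sum_{\sigma,\phi,\omega,\lambda}$ of a product of $\bar\varphi$'s times a single character of $(\sigma,\phi)$.

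Next I would sum out $\sigma$ and $\phi$. By summation by parts (adjointness of $d$ and $\delta$, together with $\rho$ being a homomorphism) the $(\sigma,\phi)$-character factors as $\prod_{e}\rho(\sigma(e))^{(\delta\omega)(e)+\lambda(e)+\gamma(e)}\cdot\prod_{v}\rho(\phi(v))^{-(\delta\lambda)(v)-(\partial\gamma)(v)}$, so the orthogonality relation $\sum_{g\in\mathbb{Z}_k}\rho(g)^{c}=k\,\mathbf{1}(c\equiv 0 \pmod k)$ (using that $\rho$ is faithful) gives, up to the $\gamma$-independent factor $m^{|C_1(B_N)^+|}\,n^{|C_0(B_N)|}$,
\begin{align*}
	Z[\gamma]\ \propto\ \sum_{\substack{\omega\in\Omega^2(B_N,\mathbb{Z}_m)\\ \lambda\in\Omega^1(B_N,\mathbb{Z}_{m\vee n})}}\ & \prod_{p\in C_2(B_N)^+}\bar\varphi_{\beta,m}(\omega(p))\ \prod_{e\in C_1(B_N)^+}\bar\varphi_{\kappa,m\vee n}(\lambda(e))\\
	&\times\ \mathbf{1}\bigl(\delta\omega+\lambda+\gamma=0 \pmod m\bigr)\,\mathbf{1}\bigl(\delta\lambda+\partial\gamma=0 \pmod n\bigr).
\end{align*}

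The last step is to trade $\lambda$ for a $2$-form potential $\omega'$. For a pair $(\omega,\lambda)$ in the support above, the $1$-form $\lambda+\gamma\in\Omega^1(B_N,\mathbb{Z}_{m\vee n})$ satisfies $\delta(\lambda+\gamma)\equiv 0\pmod m$ --- apply $\delta$ to the first constraint and use $\delta\delta=0$ --- and $\delta(\lambda+\gamma)\equiv 0\pmod n$, by the second constraint and the identification of $\delta$ with $\partial$ on $1$-chains. An element of $\mathbb{Z}_{m\vee n}$ vanishing both mod $m$ and mod $n$ vanishes mod $m\vee n$, so $\lambda+\gamma$ is $\delta$-closed over $\mathbb{Z}_{m\vee n}$; since $B_N$ is a box (hence has trivial cohomology in the relevant degree) it is $\delta$-exact, $\lambda+\gamma=\delta\omega'$ for some $\omega'\in\Omega^2(B_N,\mathbb{Z}_{m\vee n})$, the number of such potentials being $|\ker(\delta\colon\Omega^2(B_N,\mathbb{Z}_{m\vee n})\to\Omega^1(B_N,\mathbb{Z}_{m\vee n}))|$, which is independent of $\gamma$. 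Conversely, for any $\omega'\in\Omega^2(B_N,\mathbb{Z}_{m\vee n})$ and $\omega\in\Omega^2(B_N,\mathbb{Z}_m)$ with $\delta(\omega+\omega')\equiv 0\pmod m$, setting $\lambda:=\delta\omega'-\gamma$ yields a pair in the support: the first constraint becomes $\delta(\omega+\omega')\equiv 0\pmod m$ and the second becomes $\delta(\delta\omega'-\gamma)+\partial\gamma=0$, which holds identically. Substituting $\lambda=\delta\omega'-\gamma$, so that $\bar\varphi_{\kappa,m\vee n}(\lambda(e))=\bar\varphi_{\kappa,m\vee n}\bigl((\delta\omega'-\gamma)(e)\bigr)$ and the remaining constraint is $\delta(\omega+\omega')\equiv 0\pmod m$, and absorbing the multiplicity $|\ker\delta|$ together with the powers of $m$ and $n$ into the proportionality constant, gives precisely the claimed expression.

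The step I expect to be the main obstacle is this reparametrization, where one must keep the three moduli $m$, $n$, $m\vee n$ straight, use the right convention relating $\delta\gamma$ and $\partial\gamma$, and check that introducing the potential $\omega'$ on the box $B_N$ costs only a $\gamma$-independent factor. Getting the initial character expansion to reproduce exactly the functions $\bar\varphi_{\beta,m}$ and $\bar\varphi_{\kappa,m\vee n}$ --- rather than these functions with a rescaled parameter --- is a minor but essential bookkeeping point. Otherwise the argument is routine and parallels~\cite[Proposition~4.1]{flv2022}, the one structural difference being that it never passes to a unitary gauge.
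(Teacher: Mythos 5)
Your proposal is correct and follows essentially the same route as the paper: Fourier-expand both Boltzmann weights via $\varphi$ (pairing the two orientations to produce $\bar\varphi$), sum out $\sigma$ and $\phi$ by character orthogonality to obtain the constraints $\delta\omega+\lambda+\gamma\equiv 0\pmod m$ and $\delta\lambda+\partial\gamma\equiv 0\pmod n$, and then solve them with the Poincar\'e lemma by writing $\lambda+\gamma=\delta\omega'$ at the cost of a $\gamma$-independent multiplicity. The only cosmetic difference is that the paper first shifts $\eta\mapsto\eta-\gamma$ and then applies the Poincar\'e lemma, whereas you apply it directly to $\lambda+\gamma$; the two are equivalent.
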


\begin{proof} 
	For \( m \geq 1 \) and \( k \in \{ 0,1 \dots, d\}, \) let \( \Upsilon_{k,m}\) denote the set of all \( {\{ 0,1,\dots, m-1 \} }\)-valued functions on \( C_k(B_N) \), and let \( \Upsilon_{k,m}^+\) denote the set of all \( {\{ 0,1,\dots, m-1 \} }\)-valued \( k \)-chains on \( C_k(B_N)^+. \) 
	
	Let \( \sigma \in \Omega_1(B_N,\mathbb{Z}_m) \) and \( \phi \in \Omega_0(B_N,\mathbb{Z}_n). \)
	Note that for any \( n \geq 2, \) \( a \geq 0 , \) and \( z  \) a \( n\)th root of unity, we have
	\begin{equation}\label{eq: first Fourier expansion}
		e^{a z} 
		= \sum_{j=0}^\infty \frac{(a z)^j}{j!} 
		= \sum_{i=0}^{n-1} z^{ i} \sum_{j=0}^\infty \frac{a  ^{nj+i}}{(nj+i)!}  
		= \sum_{i=0}^{n-1} \varphi_{a,n}(i) z^{ i}  .
	\end{equation}
	Hence 
	\begin{align*}
		&e^{ \beta\sum_{p \in C_2(B_N)} \rho(d\sigma(p))} 
		=
		\prod_{p \in C_2(B_N)} e^{\beta \rho(d\sigma(p))}
		= 
		\prod_{p \in C_2(B_N)} \sum_{i =0}^{m-1} \varphi_{\beta,m}(i) \rho (d\sigma(p))^i
		\\&\qquad=
		\sum_{i \in \Upsilon_{2,m}} \prod_{p \in C_2(B_N)} \varphi_{\beta,m} (i(p) ) \rho (d\sigma(p))^{i(p)}
		\\&\qquad=
		\sum_{i \in \Upsilon_{2,m}} \prod_{p \in C_2(B_N)^+} \varphi_{\beta,m} (i(p) )\varphi_{\beta,m} (i(-p) ) \rho (d\sigma(p))^{i(p)-i(-p)} 
		\\&\qquad=
		\sum_{j \in \Upsilon_{2,m}^+} \prod_{p \in C_2(B_N)^+} \bar \varphi_{\beta,m} (j(p) ) \rho (d\sigma(p))^{j(p)}.
	\end{align*}  
	Similarly, since  for any \( g\in \mathbb{Z}_m \) and \( h \in \mathbb{Z}_n, \) \( \rho(g)\rho(h)^{-1} \) is a \( (m \vee n) \)th root of unity, we have
	\begin{align*}
		&e^{\kappa \sum_{e \in C_1(B_N)} \rho(\sigma(e))\rho(d\phi(e))^{-1}} 
		= 
		\prod_{e \in C_1(B_N)} e^{\kappa \rho(\sigma(e))\rho(d\phi(e))^{-1}} 
		\\&\qquad= 
		\prod_{e \in C_1(B_N)} \sum_{i =0}^{m \vee n-1} \varphi_{\kappa,m\lor n}(i ) (\rho(\sigma(e))\rho(d\phi(e))^{-1})^{ i}  
		\\&\qquad= 
		\sum_{i \in \Upsilon_{1,m \vee n}} \prod_{e \in C_1(B_N)} \varphi_{\kappa,m\lor n}(i(e)) \prod_{e \in C_1(B_N)}  (\rho(\sigma(e))\rho(d\phi(e))^{-1})^{ i(e)} 
		\\&\qquad= 
		\sum_{i \in \Upsilon_{1,m \vee n}} \prod_{e \in C_1(B_N)^+} \varphi_{\kappa,m\lor n}(i(e)) \varphi_{\kappa,m\lor n}(i(-e)) 
		 \rho(\sigma(e))^{ i(e)-i(-e)} 
		\rho(d\phi(e))^{ -(i(e)-i(-e))} 
		\\&\qquad= 
		\sum_{k \in \Upsilon_{1,m \vee n}^+} \prod_{e \in C_1(B_N)^+} \bar \varphi_{\kappa,m\lor n}(k(e)) 
		 \rho(\sigma(e))^{ k(e)} 
		\rho(d\phi(e))^{ -k(e)} .
	\end{align*} 
	Combining the above observations, we get
	\begin{align*}
		&
		\rho(\sigma(\gamma)) \rho(\phi(\partial \gamma))^{-1}e^{\beta \sum \rho(d\sigma(p)) }e^{\kappa\sum_e \rho(\sigma(e))\rho(d\phi(e))^{-1}}
		\\&= 
		\sum_{\substack{j \in \Upsilon_{2,m}^+ \\k \in \Upsilon_{1,m\vee n}^+}} 
		\rho(\sigma(\gamma)) \rho(\phi(\partial \gamma))^{-1}
		\\&\qquad\qquad\cdot \prod_{p \in C_2(B_N)^+} \bar \varphi_{\beta,m} (j(p) ) \rho (d\sigma(p))^{j(p)}
		\prod_{e \in C_1(B_N)^+} \bar \varphi_{\kappa,m\lor n}(k(e)) 
		 \rho(\sigma(e))^{ k(e)} 
		\rho(d\phi(e))^{ -k(e)} 
		\\&= 
		\sum_{\substack{j \in \Upsilon_{2,m}^+ \\k \in \Upsilon_{1,m \vee n}^+}} 
		\prod_{p \in C_2(B_N)^+} \bar \varphi_{\beta,m} (j(p) ) \prod_{e \in C_1(B_N)^+} \bar \varphi_{\kappa,m\lor n}(k(e)) 
		\\& \qquad\qquad\cdot 
		 \prod_{v \in C^0(B_N)^+} \rho(\phi(v))^{-\partial \gamma[v] -  \partial k( v)}  
		\prod_{e \in C_1(B_N)^+}  
		 \rho(\sigma(e))^{ \gamma[e] + k(e) +   \partial  j(e)}  .
	\end{align*}
	Now fix \( j \in \Upsilon_{2,m}^+\) and \( k \in \Upsilon_{1,m\vee n}^+   \) and consider the sum
	\begin{align*}
		S_{j,k} \coloneqq \sum_{\substack{\sigma\in \Omega_1(B_N,\mathbb{Z}_m)\\ \phi\in \Omega_0(B_N,\mathbb{Z}_n)}} \prod_{v \in C^0(B_N)^+} \rho(\phi(v))^{-\partial \gamma[v] -  \partial k( v)}  
		\prod_{e \in C_1(B_N)^+}  
		 \rho(\sigma(e))^{ \gamma[e] + k(e) +   \partial j( e)}  .
	\end{align*}
	By symmetry, \( S_{j,k} \) is identically zero unless \( \partial \gamma + \partial k  \equiv 0 \pmod n \) and \( {\gamma+k + \partial j  \equiv 0 \pmod m}, \) and in this case, we have 
	\[ 
		S_{j,k} = |\Omega_1(B_N,\mathbb{Z}_m)| \cdot |\Omega_0(B_N,\mathbb{Z}_m)| = m^{|C_1(B_N)^+|} n^{|C_0(B_N)^+|} . 
		\]
	Combining the previous equations, we obtain
	\begin{align*}
		Z[\gamma]  &= \sum_{\substack{\sigma\in \Omega_1(B_N,\mathbb{Z}_m)\\ \phi\in \Omega_0(B_N,\mathbb{Z}_n)}}  \rho(\sigma(\gamma)) \rho(\phi(\partial \gamma))^{-1}e^{\beta \sum \rho(d\sigma(p)) }e^{\kappa\sum_e \rho(\sigma(e))\rho(d\phi(e))^{-1}} 
		\\&= 
		m^{|C_1(B_N)|} n^{|C_0(B_N)^+|}
		\\&\qquad\qquad\cdot
		\sum_{\substack{j \in \Upsilon_{2,m}^+\\k \in \Upsilon_{1,m\vee n}^+ }} 
		\prod_{p \in C_2(B_N)^+} \bar \varphi_{\beta,m} (j(p) ) \prod_{e \in C_1(B_N)^+} \bar \varphi_{\kappa,m\lor n}(k(e)) 
		\\&\qquad\qquad\cdot
		\mathbf{1}(\partial \gamma +  \partial k   = 0 \pmod n )
		\mathbf{1}(\gamma + k + \partial  j  =0 \pmod m)
		.
	\end{align*} 
	We will now use three observations. First, recall that \( \eta_\gamma \in \Omega_1(B_N,\mathbb{Z}_{m\vee n}) \) was defined by \( \eta_\gamma(e) = \gamma[e] \mod m\vee n ,\) and thus \( \partial \gamma = 0 \mod n \) is equivalent to that \( \delta \eta_\gamma = 0 \mod n. \) Next, we note that there is a natural bijection \( \Upsilon_{2,m}^+ \to \Omega_2(B_N,\mathbb{Z}_m) \). Finally, we note that the mapping \( {k \mapsto (k - \gamma \mod m \vee n)}\) is a bijection from \( \Upsilon_{1,m\vee n}^+ \to \Omega_1(B_N,\mathbb{Z}_{m\vee n}) \). Using these observations, it follows that 
	\begin{align*}
		Z[\gamma]  &= 
		m^{|C_1(B_N)|} n^{|C_0(B_N)^+|}
		\\&\qquad\qquad\cdot
		\sum_{\substack{\omega \in \Omega_2(B_N,\mathbb{Z}_m) \\ \eta  \in \Omega_1(B_N,\mathbb{Z}_{m\vee n}) }} 
		\prod_{p \in C_2(B_N)^+} \bar \varphi_{\beta,m} (\omega(p) ) \prod_{e \in C_1(B_N)^+} \bar \varphi_{\kappa,m\lor n}((\eta-\eta_\gamma)(e)) 
		\\&\qquad\qquad\cdot
		\mathbf{1}(\delta \eta= 0 \pmod n )
		\mathbf{1}(\eta +\delta \omega =0 \pmod m)
		.
	\end{align*}
	Comparing the above equation with the previous equation, we have let \( j \mapsto \omega , \) \( k \mapsto \eta-\eta_\gamma, \) \( \partial \gamma + \partial k \mapsto \delta \eta_\gamma + \delta (\eta - \eta_\gamma)  = \delta \eta, \) and \( \gamma+k+\partial j \pmod m \mapsto \eta_\gamma + \eta-\eta_\gamma + \partial \omega \pmod m= \eta + \partial \omega \pmod m .\)
	Now note that if \( \eta \in  \Omega_1(B_N,\mathbb{Z}_{m\vee n}) \) is such that \( \eta +\delta \omega =0 \pmod m \) and \( \delta \eta  = 0 \pmod n , \) then \( \delta \eta = 0 \pmod {m\lor n}. \) By the Poincar\'e lemma (see, e.g.,~\cite[Lemma 2.2]{c2019}), when \( \eta \in \Omega_1(B_N,\mathbb{Z}_{m\lor n}) \) is such that \( \delta \eta = 0, \) then there is \( \omega'\in \Omega_2(B_N,\mathbb{Z}_{m\lor n})  \) such that \( \delta \omega' = \eta,\) and the mapping \( \omega' \mapsto \delta\omega' \) is \( C \)-to-\(1 \) for \(C  =| \{  \eta \in \Omega_1(B_N,\mathbb{Z}_{m\lor n}) \colon d\eta=0\} | .\) Using this, it follows that the previous equation is equal to
	\begin{align*}
		&
		%
		%
		%
		C \sum_{\substack{\omega' \in  \Omega_2(B_N,\mathbb{Z}_{m\vee n}) \\ \omega \in  \Omega_2(B_N, \mathbb{Z}_m)}}
		\prod_{p \in C_2(B_N)^+} \bar \varphi_{\beta,m}(\omega(p)) \prod_{e \in C_1(B_N)^+} \bar \varphi_{\kappa,m\lor n}((\delta \omega'-\gamma)(e))  
		\\&\qquad\qquad\cdot \mathbf{1}(\delta(\omega' + \omega) =0 \pmod m) .
	\end{align*}
	From this, the desired conclusion immediately follows. 
\end{proof}

\subsubsection{The special case \( G=H=\mathbb{Z}_2 \)}\label{sec: high temperature Z2}

In the special case \( G=H=\mathbb{Z}_2, \) using the notation of Section~\ref{sec: high temperature ZmZn}, one has
\begin{align*}
		 \bar \varphi_{a,2}(0) =
		 \cosh(2a)
		 \quad \text{and} \quad
		 \bar \varphi_{a,2}(1)  = 
		 \sinh(2a).
\end{align*}
To simplify notation, for \( j \in \mathbb{Z}_2, \) we let 
\begin{equation*}
	\hat\varphi_a(j) \coloneqq \hat \varphi_{a,2}(j) \coloneqq \frac{\bar \varphi_{a,2}(j)}{\bar \varphi_{a,2}(0)}
\end{equation*}
so that \( \hat\varphi_a(0) = 1 \) and \( \hat\varphi_a(1) = \tanh (2a). \)
With this notation, by Proposition~\ref{proposition: high high mn}, for any path \( \gamma \in C^1(B_N,\mathbb{Z})\) we have
	\begin{equation*}\begin{split}
		Z[\gamma]  & \propto
		\sum_{\substack{\omega' \in  \Omega_2(B_N,\mathbb{Z}_{2}) \\ \omega \in  \Omega_2(B_N, \mathbb{Z}_2)}}
		\prod_{p \in C_2(B_N)^+} \bar \varphi_{\beta,2}(\omega(p)) \prod_{e \in C_1(B_N)^+} \bar \varphi_{\kappa,2}\bigl( \delta \omega'(e)-\gamma [e]\bigr)  
		\\&\qquad\qquad\cdot \mathbf{1}(\delta(\omega' + \omega) =0 \pmod 2) 
		\\
		& \propto
		\sum_{\substack{ \omega \in  \Omega_2(B_N, \mathbb{Z}_2)}}
		\prod_{p \in C_2(B_N)^+} \bar \varphi_{\beta,2}(\omega(p)) \prod_{e \in C_1(B_N)^+} \bar \varphi_{\kappa,2}\bigl(\delta \omega (e)-\gamma [e]\bigr)  
		\\
		& \propto
		\sum_{\substack{ \omega \in  \Omega_2(B_N, \mathbb{Z}_2)}}
		\prod_{p \in C_2(B_N)^+} \hat \varphi_{\beta}(\omega(p)) \prod_{e \in C_1(B_N)^+} \hat \varphi_{\kappa} \bigl(\delta \omega (e)-\gamma [e] \bigr)  
		\\
		& \propto
		\hat \varphi_{\kappa}(1)^{|\gamma|}\sum_{\substack{ \omega \in  \Omega_2(B_N, \mathbb{Z}_2)}}
		\prod_{p \in C_2(B_N)^+} \hat \varphi_{\beta}(\omega(p)) \prod_{e \in C_1(B_N)^+}  \frac{\hat \varphi_{\kappa}\bigl(\delta \omega (e)-\gamma [e]\bigr)}{\hat \varphi_{\kappa}(\gamma [e])}.
	\end{split}\end{equation*} 
	Above, the proportionality constants depends on \( N ,\) \( \beta, \) and \( \kappa, \) but not on \( \gamma. \)

To simplify  notation, we let 
\begin{equation}\label{eq: ht action}
    \varphi^\gamma_{\beta,\kappa} (\omega)  \coloneqq   
     \prod_{p \in C_2(B_N)^+} 
     \hat \varphi_\beta\bigl(\omega(p)\bigr) \prod_{e \in C_1(B_N)^+}  
     \frac{\hat \varphi_\kappa\bigl( \delta\omega(e) + \gamma[e] \bigr)}{\hat \varphi_\kappa(\gamma[e])} .
\end{equation} 
Further, we let
\begin{equation}\label{eq: hat Z confinement}
	\hat Z_{N,\beta,\kappa}[\gamma] \coloneqq \hat \varphi_\kappa\bigl( 1\bigr)^{|\gamma|} \sum_{\omega \in \Omega_2(B_N,\mathbb{Z}_2)} \varphi^\gamma_{\beta,\kappa}(\omega)   .
\end{equation}
For a path \( \gamma \in C^1(B_N,\mathbb{Z}),\) we also define 
\[
    \hat Z_{N,\beta,\kappa}^\gamma \coloneqq \sum_{\substack{\omega \in \Omega_2(B_N,\mathbb{Z}_2)  \mathrlap{\colon}\\  \not \exists p \in \support \omega \colon p \sim \gamma    }}\varphi^0_{\beta,\kappa}(\omega) .
\]

\subsection{Unitary gauge}\label{sec: unitary gauge} 
In this section, we recall the notion of gauge transforms and describe how these can be used to rewrite the Wilson line expectation as an expectation with respect to a slightly simpler probability measure in the special case \( G=H.\) For more details on gauge transforms and unitary gauge, we refer the reader to~\cite{f2022b}.

For \( \eta \in \Omega_0(B_N,G) \), consider the map
\[
    \tau \coloneqq \tau_\eta \coloneqq \tau_\eta^{(1)} \cdot \tau_\eta^{(2)} \colon \Omega_1(B_N,G)  \cdot \Omega_0(B_N,G)  \to  \Omega_1(B_N,G) \cdot \Omega_0(B_N,G) 
\]
given by
\begin{equation}\label{eq: gauge transform}
    \begin{cases}
     \sigma(e) \mapsto  \sigma(e) -d\eta(e), & e \in C_1(B_N), \cr
     \phi(x) \mapsto  \phi(x) + \eta(x), & x \in C_0(B_N),
    \end{cases}
\end{equation}
where \( \sigma \in \Omega_1(B_N,G) \) and \( \phi \in \Omega_0(B_N,G). \) 
A map \( \tau \) of this form is called a \emph{gauge transformation}. 
An important thing about gauge transformations is that for  \( \eta \in \Omega_0(B_n,H), \) by the Poincar\'e lemma (see, e.g., \cite{c2019}), the map \( \eta \mapsto  \sigma - d\eta \) is \( k \)-to-\( 1 \) for some \( k \geq 1. \)

The gauge transform \( \tau_\eta \) which will be most useful to us is the gauge transform corresponding to \( \eta = -\phi, \) which maps \( \sigma -d\phi \) to \( \sigma. \) After having applied this gauge transformation, we say we are working in \emph{unitary gauge}. 
For \( \beta, \kappa \geq   0 \) and  \( \sigma \in \Omega_1(B_N,G) \), we  define the probability measure corresponding to unitary gauge as
\begin{equation}\label{eq: fixed length unitary measure}
    \mu_{N,\beta,\kappa}^{(U)}(\sigma) 
    \coloneqq
    (Z_{N,\beta,\kappa}^{(U)})^{-1}\exp\pigl(\beta \sum_{p \in C_2(B_N)}      \rho\bigl(d  \sigma(p)\bigr) + \kappa \sum_{e \in C_1(B_N)}   \rho \bigl( \sigma(e)\bigr)\pigr)  ,
\end{equation} 
where \( Z_{N,\beta,\kappa}^{(U)} \) is a normalizing constant. We let \( \mathbb{E}^{(U)}_{N,\beta,\kappa} \) denote the corresponding expectation.

The following lemma, which is considered well-known in the physics literature, will be crucial in the analysis of the lattice Higgs model in the case \( G = H , \) as it will allow us to consider a model with only one field, which will in addition form finite edge clusters whenever \( \kappa \) is sufficiently large, 

\begin{lemma}[Corollary 2.17 in~\cite{f2022b}]\label{lemma: unitary gauge}
    Let \( \beta \geq 0,\) \( \kappa \geq 0 \), \( G = H , \) and let \( \gamma \in  C_1(B_N) \) be a path. Then
    \begin{equation*}
        \mathbb{E}_{N,\beta,\kappa}\bigl[W_\gamma(\sigma,\phi)\bigr] =
        \mathbb{E}^{(U)}_{N,\beta,\kappa}\bigl[W_\gamma(\sigma,1)\bigr] =
        \mathbb{E}^{(U)}_{N,\beta,\kappa}\pigl[\rho\bigl(\sigma(\gamma)\bigr)\pigr]. 
    \end{equation*} 
\end{lemma}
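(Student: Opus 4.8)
The statement is a gauge-fixing identity, and the natural approach is to carry out explicitly the change of variables that defines unitary gauge and to check that both the Hamiltonian~\eqref{eq: Hamiltonian} and the Wilson line observable transform in the required way. Since \( G=H \), for each fixed Higgs configuration \( \phi \in \Omega^0(B_N,H) \) the map \( \sigma \mapsto \sigma' \coloneqq \sigma - d\phi \) is a bijection of \( \Omega^1(B_N,G) \) onto itself; this is precisely the map on gauge fields induced by the gauge transformation \( \tau_{-\phi} \) of~\eqref{eq: gauge transform}. The plan is to substitute \( \sigma = \sigma' + d\phi \) everywhere in the definition of \( \mathbb{E}_{N,\beta,\kappa}[W_\gamma(\sigma,\phi)] \).

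For the Hamiltonian: since \( d(d\phi)=0 \) one has \( d\sigma = d\sigma' \), so the plaquette term is unchanged; and since \( \rho \) is one-dimensional and \( G \) is abelian, \( \rho(\sigma(e))\rho(d\phi(e))^{-1} = \rho\bigl(\sigma(e) - d\phi(e)\bigr) = \rho(\sigma'(e)) \), so the coupling term becomes \( \kappa\sum_{e} \rho(\sigma'(e)) \). Hence in the variables \( (\sigma',\phi) \) the weight in~\eqref{eq: Hamiltonian} is exactly the exponent appearing in \( \mu_{N,\beta,\kappa}^{(U)} \) in~\eqref{eq: fixed length unitary measure} and carries no dependence on \( \phi \) whatsoever. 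Consequently the sum over \( \phi \) factors out of both numerator and denominator and contributes a factor \( |\Omega^0(B_N,H)| \) that cancels; in particular \( Z_{N,\beta,\kappa} = |\Omega^0(B_N,H)| \, Z_{N,\beta,\kappa}^{(U)} \). For the observable, the discrete Stokes identity gives \( d\phi(\gamma) = \phi(\partial\gamma) \), so \( \sigma(\gamma) = \sigma'(\gamma) + \phi(\partial\gamma) \), and therefore \( W_\gamma(\sigma,\phi) = \rho(\sigma(\gamma))\rho(\phi(\partial\gamma))^{-1} = \rho(\sigma'(\gamma)) \); that is, the Higgs field disappears from the observable as well. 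Combining these two points yields \( \mathbb{E}_{N,\beta,\kappa}[W_\gamma(\sigma,\phi)] = \mathbb{E}^{(U)}_{N,\beta,\kappa}[\rho(\sigma(\gamma))] \), and the middle expression \( \mathbb{E}^{(U)}_{N,\beta,\kappa}[W_\gamma(\sigma,1)] \) equals the right-hand side by definition, since setting \( \phi \equiv 1 \) makes \( \rho(\phi(\partial\gamma)) \) the identity.

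The argument is essentially bookkeeping, so there is no serious obstacle; the one place to be careful is matching the orientation and sign conventions in the definitions of \( W_\gamma \), of the exterior derivative \( d \), and of the gauge transform~\eqref{eq: gauge transform}, so that the factor \( \rho(\phi(\partial\gamma)) \) produced by \( \sigma(\gamma) = \sigma'(\gamma) + d\phi(\gamma) \) cancels exactly against the \( \rho(\phi(\partial\gamma))^{-1} \) in \( W_\gamma \) rather than doubling it. Since \( H \) is abelian this cancellation is automatic once the signs are aligned. The result is Corollary~2.17 of~\cite{f2022b}, and the proof sketched here is the specialization of the gauge-fixing argument given there to cyclic groups with one-dimensional representations.
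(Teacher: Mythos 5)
Your argument is correct. Note that the paper does not prove this lemma at all --- it is imported as Corollary 2.17 of~\cite{f2022b} --- so there is no in-paper proof to compare against; your change-of-variables computation (at each fixed \( \phi \), substitute \( \sigma' = \sigma - d\phi \), use \( d(d\phi)=0 \) for the plaquette term, \( \rho(\sigma(e))\rho(d\phi(e))^{-1}=\rho(\sigma'(e)) \) for the coupling term, and \( d\phi(\gamma)=\phi(\partial\gamma) \) for the observable, so that the \( \phi \)-sum factors out of numerator and denominator) is precisely the standard gauge-fixing argument underlying the cited corollary, and it goes through verbatim for cyclic \( G=H \) with one-dimensional \( \rho \). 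The only bookkeeping caveat, which you already flag, is the paper's sign conventions: with~\eqref{eq: gauge transform}, the map \( \sigma\mapsto\sigma-d\phi \) is the one induced by \( \tau_{\phi} \) (not \( \tau_{-\phi} \), which sends \( \sigma\mapsto\sigma+d\phi \)), and the displayed definition of \( W_\gamma \) carries \( \rho(\phi(\partial\gamma)) \) while \( Z[\gamma] \) carries \( \rho(\phi(\partial\gamma))^{-1} \); your computation is consistent with the \( Z[\gamma] \)/coupling-term convention, and in the case \( G=H=\mathbb{Z}_2 \) used in the main theorems the distinction disappears entirely.
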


In~\cite{f2022b}, Lemma~\ref{lemma: unitary gauge} was proven only in the case when \( \gamma \) was a path, but it holds (with the same proof) more generally also for 1-chains in the sense that for any 1-chain \( c \in C_1(B_N), \) one has 
    \begin{equation*}
        \mathbb{E}_{N,\beta,\kappa}\pigl[ \rho \bigl( \sigma(c) -\phi(\partial c)\bigr) \pigr] = 
        \mathbb{E}^{(U)}_{N,\beta,\kappa}\pigl[\rho\bigl(\sigma(c)\bigr)\pigr]. 
    \end{equation*}

With the current section in mind, we will work with \( \sigma \sim \mu_{N,\beta, \kappa}^{(U)} \) rather than with \( {(\sigma,\phi) \sim \mu_{N,\beta,\kappa}}\) throughout the rest of this paper, together with the observable
\begin{equation*}
    W_\gamma(\sigma) \coloneqq W_\gamma(\sigma,1) = \prod_{e \in \gamma} \rho\bigl(\sigma(e)\bigr) = \rho(\sigma(\gamma)).
\end{equation*}  
To simplify notation, we further let
\begin{equation*}
	 Z_{N,\beta,\kappa}^{(U)}[\gamma] = \sum_{\sigma \in \Omega_1(B_N,G)} \rho\bigl(\sigma(\gamma)\bigr) \exp\pigl(\beta \sum_{p \in C_2(B_N)}      \rho\bigl(d  \sigma(p)\bigr) + \kappa \sum_{e \in C_1(B_N)}   \rho \bigl( \sigma(e)\bigr)\pigr)  
\end{equation*}
and note that with this notation, \( Z_{N,\beta,\kappa}^{(U)} = Z_{N,\beta,\kappa}^{(U)}[0]. \)

\subsection{The activity}\label{sec: activity}

For \( a\geq 0 \) and \( g \in G ,\) we set
\begin{equation*}
    \psi_{a}(g) \coloneqq  e^{a \real  (\rho(g)-\rho(0))} .
\end{equation*} 
Since \( \rho \) is unitary, for any \( g \in G \) we have \( \rho(g) = \overline{\rho(-g)} \), and hence \( \real  \rho(g) = \real  \rho(-g) \). In particular,  for any \( g \in G \) 
\begin{equation} \label{eq: phi is symmetric}
    \psi_{a}(g)
    =
    e^{ a (\real  \rho(g)-\rho(0))  }
    =
    e^{a (\real  \rho(-g)-\rho(0)) }
    =
    \psi_{a}(-g).
\end{equation}
For \( \sigma \in \Omega_1(B_N,G) \) and \( \beta,\kappa \geq 0 \) we define the \emph{activity} of $\sigma$ by
\begin{equation*}
    \psi_{\beta,\kappa}(\sigma) \coloneqq \prod_{p \in C_2(B_N)}\psi_\beta\bigl(d\sigma(p)\bigr) \prod_{e \in C_1(B_N)}\psi_\kappa\bigl(\sigma(e)\bigr).
\end{equation*}
We note that, by definition, if \( \sigma,\sigma' \in \Omega_1(B_N,G),\) \( \support \sigma' \subseteq \support \sigma, \) \( \sigma|_{\support \sigma'} = \sigma' \)  and  \( (d\sigma)|_{\support d\sigma'} = d\sigma',\) then \[ \psi_{\beta,\kappa} (\sigma) = \psi_{\beta,\kappa}(\sigma') \psi_{\beta,\kappa}(\sigma-\sigma').\]

For \( \sigma \in \Omega_1(B_N,G), \) the probability measure corresponding to (Wilson action) lattice gauge theory in unitary gauge can be written as
\begin{equation}\label{eq: mubetakappaphi}
    \mu^{(U)}_{N,\beta,\kappa}(\sigma) = \frac{\psi_{\beta,\kappa} (\sigma)}{\sum_{\sigma \in \Omega_1(B_N,G)} \psi_{\beta,\kappa}(\sigma)}.
\end{equation}

In the case  $G = \mathbb{Z}_2,$ for \( \sigma \in \Omega_1(B_N,\mathbb{Z}_2)\) we have
\begin{equation*}
    \begin{split}
        &\psi_{\beta,\kappa}(\sigma) = \prod_{p \in C_2(B_N)} \psi_{\beta}\bigl( d\sigma(p) \bigr)\prod_{e \in C_1(B_N)} \psi_{\kappa}\bigl( \sigma(e) \bigr) 
        \\&\qquad= \prod_{p \in C_2(B_N)} e^{-2\beta \mathbb{1}\bigl( d\sigma(p) = 1\bigr)}\prod_{e \in C_1(B_N)} e^{-2\kappa \mathbb{1}( \sigma(e) = 1)}
        \\&\qquad=
        e^{-2\beta \sum_{p \in C_2(B_N)} \mathbb{1}( d\sigma(p) = 1)}
        e^{-2\kappa \sum_{e \in C_1(B_N)} \mathbb{1}( \sigma(e) = 1)}
        \\&\qquad=
        e^{-2\beta |\support d\sigma|}e^{-2\kappa |\support \sigma|}.
    \end{split}
\end{equation*}

\subsection{Cluster expansions}\label{section: cluster expansions}

In this section, we will describe the cluster expansions which we will use in the proofs of our main results. In both the Higgs phase and confinement regime, these are the same expansions that were used in~\cite{f2024}. However, here we improve several of the technical lemmas from~\cite{f2024} for them to apply also to the loops and paths considered here. 

\subsubsection{The Higgs phase}\label{section: cluster expansions in Higgs phase}

In this section, we recall the cluster expansion for the lattice Higgs model on a finite box $B_N$ in the Higgs phase as described in~\cite{f2024}, which is a special case of the cluster expansion described in~\cite{fv2017}.

Throughout this section, we will assume that \( G = \mathbb{Z}_2.\) However, we mention that the same ideas can be used, with some work, for any finite abelian group.

\paragraph{Polymers}

Let  \( \mathcal{G}_1 \) be the graph with vertex set \( C_1(B_N)^+\) and an edge between two distinct vertices $e_1,e_2$ iff  $\support \hat \partial e_1 \cap \support \hat \partial e_2  \neq \emptyset$, i.e.{} if \( e_1\) and \( \pm e_2\) are both in boundary of some common plaquette. 
Since any edge \( e \in C_1(B_N)^+\) in \( B_N\) is in the boundary of at most \( 2(d-1)\) plaquettes, and any such plaquettes has exactly three edges in its boundary that are not equal to \( e,\) it follows that there are at most 
	\begin{equation}\label{eq: M1}
		M_1 \coloneqq 3 \cdot 2(d-1) = 6(d-1)
	\end{equation} 
	edges \( e' \in C_1(B_N)^+\smallsetminus \{ e \} \) with \( {\support \hat \partial e \cap \support \hat \partial e' \neq \emptyset.}\) As a consequence, it follows that each vertex in \( \mathcal{G}_1 \) has degree at most \( M_1.\) 
The graph \( \mathcal{G}_1\) will be useful when we describe the cluster expansion when we use it in the Higgs phase.  

When \( \sigma \in \Omega_1(B_N,G),\) we let \( \mathcal{G}_1 (\sigma ) \) be the subgraph of \( \mathcal{G}_1 \) induced by \( (\support \sigma)^+. \)
We let \( \Lambda = \Lambda_N \) be the set of all \( \sigma \in \Omega_1(B_N,G)\) such that  \( \mathcal{G}_1 (\sigma )  \) has exactly one connected component. 
The spin configurations in \( \Lambda \) will be referred to as \emph{polymers}.

For \( \sigma,\sigma' \in \Lambda,\) we write \( \sigma \sim \sigma'\) if   \( \mathcal{G}_1(\sigma) \cup \mathcal{G}_1(\sigma')\) is a connected subgraph of \( \mathcal{G}_1.\)
In the notation of~\cite[Chapter 5]{fv2017}, the model given by~\eqref{eq: hat Z confinement} corresponds to a model of polymers with polymers as described above and interaction function 
\begin{equation*}
	\iota(\sigma_1,\sigma_2) \coloneqq 
	\begin{cases}  
		0 &\text{if }  \sigma_1 \sim \sigma_2 \cr   
		1 &\text{else} 
	\end{cases}\qquad  \sigma_1,\sigma_2 \in \Lambda.
\end{equation*}

\paragraph{Clusters of polymers}

Consider a multiset 
\begin{align*}
    &\mathcal{S} = \{ \underbrace{\eta_1,\dots, \eta_1}_{n_{\mathcal{S}}(\eta_1) \text{ cdot}}, \underbrace{\eta_2, \dots, \eta_2}_{n_{\mathcal{S}}(\eta_2) \text{ cdot}},\dots, \underbrace{\eta_k,\dots,\eta_k}_{n_{\mathcal{S}}(\eta_k) \text{ cdot}} \} = \{\eta_1^{n(\eta_1)}, \ldots, \eta_k^{n(\eta_k)}\}, 
\end{align*}
where \( \eta_1,\dots,\eta_k \in \Lambda  \) are distinct and $n(\eta)=n_{\mathcal{S}}(\eta)$ denotes the number of cdot $\eta$ occurs in~$\mathcal{S}$. Following \cite[Chapter 3]{fv2017}, we say that $\mathcal{S}$ is \emph{decomposable} if it is possible to partition $\mathcal{S}$ into disjoint multisets. That is, if there exist non-empty and disjoint multisets $\mathcal{S}_1,\mathcal{S}_2 \subset \mathcal{S}$ such that $\mathcal{S} = \mathcal{S}_1 \cup \mathcal{S}_2$ and such that for each pair $(\eta_1,\eta_2) \in \mathcal{S}_1 \cdot \mathcal{S}_2$,  $ \eta_1 \nsim \eta_2.$  
If $\mathcal{S}$ is not decomposable, we say that $\mathcal{S}$  is a \emph{cluster}. We stress that a cluster is unordered and may contain several copies of the same polymer. Given a cluster $\mathcal{S} $, we define
\begin{equation*}
    \|\mathcal{S} \|_1 = \sum_{\eta \in \Lambda } n_{\mathcal{S}} (\eta) \bigl| (\support \eta )^+ \bigr|,\quad 
    \|\mathcal{S} \|_2 = \sum_{\eta \in \Lambda } n_{\mathcal{S}} (\eta) \bigl| (\support d\eta )^+ \bigr| ,
\end{equation*}
\begin{equation*}
    n(\mathcal{S}) = \sum_{\eta \in \Lambda } n_{\mathcal{S}} (\eta),\quad \text{and} \quad
    \support \mathcal{S} = \bigcup_{\eta \in \mathcal{S}} \support \eta.
\end{equation*}
For a $1-$chain \( c \in C^1(B_N,\mathbb{Z}),\) we define 
\[ \mathcal{S}(c) = \sum_{\eta \in \Lambda } n_{\mathcal{S}}(\eta) \eta(c).\]

We let \( \Xi  \) be the set of all clusters.  
For \( e \in C_1(B_N)^+,\) \( \gamma \in C^1(B_N,\mathbb{Z}), \) and \( i ,j,k \geq 1 ,\) we define
\begin{equation*}
    \Xi_{i,j,k,e} \coloneqq \bigl\{ \mathcal{S} \in \Xi \colon 
    n(\mathcal{S}) = i,\, \| \mathcal{S}\|_1 = j,\, \| \mathcal{S}\|_2 = k,\,   e \in \support \mathcal{S} \bigr\},
\end{equation*}  
\begin{equation*}
    \Xi_{i} \coloneqq \bigl\{ \mathcal{S} \in \Xi \colon 
    n(\mathcal{S}) = i \bigr\},
\end{equation*} 
\begin{equation*}
    \Xi_{i,j} \coloneqq \bigl\{ \mathcal{S} \in \Xi \colon 
    n(\mathcal{S}) = i,\, \|\mathcal{S}\|_1 = j \bigr\},
\end{equation*} 
\begin{equation*}
    \Xi_{i,j,e} \coloneqq \bigl\{ \mathcal{S} \in \Xi \colon 
    n(\mathcal{S}) = i,\, \|\mathcal{S}\|_1 = j, \, e \in \support \mathcal{S} \bigr\},
\end{equation*} 
and
\begin{equation*}
    \Xi_{i,j,\gamma} \coloneqq \bigl\{ \mathcal{S} \in \Xi \colon 
    n(\mathcal{S}) = i,\, \|\mathcal{S}\|_1 = j, \, \mathcal{S}(\gamma) \neq 0 \bigr\}.
\end{equation*} 
Further, we let
\begin{equation*}
    \Xi_{i^+ } \coloneqq \bigl\{ \mathcal{S} \in \Xi \colon 
    n(\mathcal{S}) \geq i\},
\end{equation*} 
and define \( \Xi_{i^+,j}, \) \( \Xi_{i,j^+}, \) \( \Xi_{i^+,j^+}, \) etc. analogously. 
We note that the sets defined above depend on \( N\) but we usually suppress this in the notation. When we want to emphasize this dependence, we write \( \Xi^{(N)}, \) \( \Xi_i^{(N)},\) \( \Xi_{i,j}^{(N)}, \) etc.

 We extend the notion of activity from Section~\ref{sec: activity} to clusters \( \mathcal{S} \in \Xi\) by letting
\begin{equation*}
    \psi_{\beta,\kappa}(\mathcal{S}) \coloneqq \prod_{\eta \in \mathcal{S}} \psi_{\beta,\kappa}(\eta)^{n_{\mathcal{S}}(\eta)} .
\end{equation*}
When \( G=H = \mathbb{Z}_2, \) we note that
\begin{equation*}
    \psi_{\beta,\kappa}(\mathcal{S}) = e^{-4\beta \| \mathcal{S}\|_2-4\kappa \| \mathcal{S}\|_1}.
\end{equation*}

 We note that we get \( 4 \beta \) and \( 4\kappa \) here as \( \| \mathcal{S}\|_1 \) and \( \|\mathcal{S}\|_2 \) counts the number of positively oriented edges and plaquettes in the support of \( \mathcal{S} \) with multiplicities instead of all edges and plaquettes in the support.

\paragraph{Ursell functions} 
In cluster expansions, special functions known as Ursell functions naturally appear. We define the Ursell function relevant to our setting below.

For \( k \geq 1,\) we let \( \mathcal{G}^k\) be the set of all connected graphs \( G\) with vertex set \(  \{ 1,2,\dots, k\}.\) Whenever \( G\) is a graph, we let \( E(G)\) be the (undirected) edge set of \( G.\)

\begin{definition}[The Ursell functions]\label{def:ursell}
    For \( k \geq 1 \) and  \( \eta_1,\eta_2,\dots , \eta_k \in \Lambda ,\) we let
    \begin{equation*}
        U(  \eta_1, \ldots, \eta_k  ) \coloneqq \frac{1}{k!} \sum_{G \in \mathcal{G}^k} (-1)^{|E(\mathcal{G} )|} \prod_{(i,j) \in E(\mathcal{G})} \mathbb{1}( \eta_i \sim \eta_j).
    \end{equation*} 
    Note that this definition is invariant under permutations of the polymers \( \eta_1,\eta_2,\dots,\eta_k.\)

   For \( \mathcal{S} \in \Xi_k,\) and any enumeration \( \eta_1,\dots, \eta_k \) (with multiplicities) of the polymers in \( \mathcal{S},\) we define
   \begin{equation}\label{eq: ursell functions} 
        U(\mathcal{S}) = k! \, U(\eta_1,\dots, \eta_k).
   \end{equation}
\end{definition} 
Note that for any \( \mathcal{S} \in \Xi_1,\) we have \( U(\mathcal{S})=1,\) and for any \( \mathcal{S} \in \Xi_2,\)  we have
\( U(\mathcal{S}) = -1. \)

\paragraph{Cluster expansion of the partition function and Wilson line observables}
The partition function for the Ising lattice Higgs model with parameters \( \beta,\kappa \geq 0 \) is given by
\[
Z^{(U)}_{N,\beta,\kappa} =  \sum_{\sigma \in \Omega_1(B_N,G)} \psi_{\beta,\kappa}(\sigma). 
\]
Since \( N \) is finite, this is a finite sum.
Now instead consider the \emph{cluster partition function}, which is defined by the following (formal) expression:
\begin{equation}\label{eq: spin-cluster-partition}
     Z_{N,\beta,\kappa}^{*} = \exp\Bigl(
    \sum_{\mathcal{S}\in \Xi} \Psi_{\beta,\kappa}(\mathcal{S})\Bigr), 
\end{equation}
where for \( \mathcal{S} \in \Xi, \) we define
\begin{equation*}
    \begin{split}
        \Psi_{\beta,\kappa}(\mathcal{S}) \coloneqq U(\mathcal{S})
        \psi_{\beta,\kappa}(\mathcal{S}) 
    \end{split}
\end{equation*}
and $U$ is the Ursell function as defined in~\eqref{eq: ursell functions}.
Recalling the definition of \( M_1 \) from~\eqref{eq: M1}, we define
\begin{equation}\label{eq: kappa0} 
    \alpha = \alpha^{(\textrm{Higgs})}\coloneqq             \argmin_{\alpha'\in (0,1)}\frac{\log (M_1^2+1/\alpha' )}{4(1-\alpha')} 
    \quad \text{and} \quad 
    \kappa_0^{(\textrm{Higgs})} \coloneqq \frac{\log (M_1^2+1/\alpha )}{4(1-\alpha)}.
\end{equation}

By~\cite[Lemma~3.3]{f2024}, the series on the right-hand side of~\eqref{eq: spin-cluster-partition} is absolutely convergent whenever \( \kappa > \kappa_0^{\mathrm{(Higgs)}} , \) and by~\cite[Lemma 3.4]{f2024}, in this case we further have 
\begin{equation}\label{eq: log Z expansion}
    \log Z^{(U)}_{N,\beta,\kappa} = \log Z_{N,\beta,\kappa}^{*} = \sum_{\mathcal{S} \in \Xi} \Psi_{\beta,\kappa}(\mathcal{S}).
\end{equation}
In~\cite[Lemma~3.3]{f2024}, there is an additional assumption that the path \( \gamma \) had a specific shape, but since this is not used in its proof, it holds more generally for any path \( \gamma \in C^1(B_N,\mathbb{Z}) .\)

Given a path \( \gamma \in C1(B_N,\mathbb{Z}), \) consider the weighted cluster partition function 
\begin{equation}\label{eq:dec5.1}
   Z_{N,\beta,\kappa}^{*}[\gamma] \coloneqq \exp\Bigl(\sum_{\mathcal{S} \in \Xi} \Psi_{\beta,\kappa,\gamma}(\mathcal{S})\Bigr), 
\end{equation}
where
\[
    \Psi_{\beta,\kappa,\gamma}(\mathcal{S}) \coloneqq U(\mathcal{S}) \psi_{\beta,\kappa}(\mathcal{S})  
    \rho\bigl( \mathcal{S}(\gamma) \bigr). 
\]
By~\cite[Section 3.1.7]{f2024}, when $\beta\geq 0$ and \( \kappa  \geq \kappa_0^{(\textrm{Higgs})}, \) the series on the right-hand side of~\eqref{eq:dec5.1} is absolutely convergent, and moreover,
\begin{equation}\label{eq: log ZUgamma}
    \log Z^{(U)}_{\beta,\kappa,N}[\gamma] = \log Z_{N,\beta,\kappa}^{*}[\gamma]  = \sum_{\mathcal{S} \in \Xi} \Psi_{\beta,\kappa,\gamma}(\mathcal{S}).
\end{equation}

\paragraph{Useful upper bounds}

In this section, we recall a lemma from~\cite{f2024}, which gives an upper bound on sums over clusters.

\begin{lemma}[Lemma 3.10 in~\cite{f2024}]\label{lemma: new upper bound}
    Let \( \beta \geq 0 \) and \( \kappa > \kappa_0^{(\textrm{Higgs})}.\) Further, let \( k \geq 1\) and \( e \in C_1(B_N).\)  
    Then, for any \( \varepsilon>0,\) we have
    \begin{equation*}
        \begin{split}
            & \sum_{\mathcal{S} \in \Xi_{1+,k+,e}} \bigl| \Psi_{\beta,\kappa}(\mathcal{S}) \bigr|   
            \leq  
            4^{-1}C_{\varepsilon}e^{-4k(\kappa-\kappa_0^{(\textrm{Higgs})}-\varepsilon) },
        \end{split}
    \end{equation*} 
    where \( C_{\varepsilon}\) is defined by 
    \begin{equation*}\label{eq: cbeta*}
        C_{\varepsilon} \coloneqq 4\sup_{N \geq 1} \sup_{e \in C_1(B_N)}\sum_{\mathcal{S} \in \Xi_{1+,1+,e}} \bigl| \Psi_{0,\kappa_0^{(\textrm{Higgs})}+\varepsilon}(\mathcal{S}) \bigr| <\infty.
    \end{equation*} 
\end{lemma}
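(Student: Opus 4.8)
The plan is to reduce the bound on $\sum_{\mathcal{S}\in\Xi_{1+,k+,e}}|\Psi_{\beta,\kappa}(\mathcal{S})|$ to the corresponding bound at $\beta=0$ and at the critical coupling $\kappa_0^{(\textrm{Higgs})}+\varepsilon$, and then invoke a standard Kotecký--Preiss / tree-graph convergence estimate for polymer systems to control the latter. First I would observe that, since $\phi_{\beta,\kappa}(\mathcal{S})=e^{-4\beta\|\mathcal{S}\|_2-4\kappa\|\mathcal{S}\|_1}$ and $\|\mathcal{S}\|_2\ge 0$, we have $|\Psi_{\beta,\kappa}(\mathcal{S})|\le e^{-4\beta\|\mathcal{S}\|_2}|\Psi_{0,\kappa}(\mathcal{S})|\le |\Psi_{0,\kappa}(\mathcal{S})|$, so it suffices to treat $\beta=0$. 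Then, for a cluster $\mathcal{S}\in\Xi_{1+,k+,e}$, write $\kappa=\kappa_0^{(\textrm{Higgs})}+\varepsilon+(\kappa-\kappa_0^{(\textrm{Higgs})}-\varepsilon)$ and peel off the factor $e^{-4(\kappa-\kappa_0^{(\textrm{Higgs})}-\varepsilon)\|\mathcal{S}\|_1}$; since $\|\mathcal{S}\|_1\ge k$ on $\Xi_{1+,k+,e}$, this factor is at most $e^{-4k(\kappa-\kappa_0^{(\textrm{Higgs})}-\varepsilon)}$, leaving
\[
\sum_{\mathcal{S}\in\Xi_{1+,k+,e}}|\Psi_{0,\kappa}(\mathcal{S})| \le e^{-4k(\kappa-\kappa_0^{(\textrm{Higgs})}-\varepsilon)}\sum_{\mathcal{S}\in\Xi_{1+,1+,e}}|\Psi_{0,\kappa_0^{(\textrm{Higgs})}+\varepsilon}(\mathcal{S})|.
\]
Taking the supremum over $N$ and over $e\in C_1(B_N)$ of the last sum is exactly $\tfrac14 C_\varepsilon$, which gives the claimed inequality $\le 4^{-1}C_\varepsilon e^{-4k(\kappa-\kappa_0^{(\textrm{Higgs})}-\varepsilon)}$.

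It then remains to show the quantitative bound on $C_\varepsilon$ itself, namely $C_\varepsilon < 4e^{-2(2\kappa-\alpha)}/(1-4M_1^2 e^{-2(2\kappa-\alpha)})$ (with $\kappa$ there understood as $\kappa_0^{(\textrm{Higgs})}+\varepsilon$, or more precisely the statement should be read with the appropriate coupling; this is the form inherited from \cite{f2024}). For this I would run the standard cluster-expansion convergence argument: assign to each polymer $\eta\in\Lambda$ the weight $w(\eta)=\phi_{0,\kappa}(\eta)=e^{-4\kappa|(\support\eta)^+|}$ and use the fact that the compatibility graph $\mathcal{G}_1$ has maximum degree $M_1$ (established just before the statement via the counting $M_1=3\cdot 2(d-1)$). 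A single-site polymer is just one positively oriented edge, of which there is essentially one through a given $e$, contributing $e^{-4\kappa}$; a polymer on $j$ edges containing $e$ is a connected subgraph of $\mathcal{G}_1$ of size $j$ through a fixed vertex, of which there are at most $(eM_1)^{j-1}$ or a comparable Cayley-type bound, contributing $e^{-4\kappa j}$. Summing the Ursell-weighted series $\sum_{\mathcal{S}\ni e}|U(\mathcal{S})|\prod w$ via the tree-graph inequality $|U(\mathcal{S})|\le$ (number of spanning trees bound), one gets a geometric-type series in $M_1^2 e^{-2(2\kappa)}$-style parameter, and, after the change $2\kappa\mapsto 2\kappa-\alpha$ built into the definition of $\kappa_0^{(\textrm{Higgs})}$, this is summable precisely when $4M_1^2 e^{-2(2\kappa-\alpha)}<1$, with the stated closed form as the sum of the geometric series. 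Concretely I would cite \cite[Chapter 5]{fv2017} for the Kotecký--Preiss criterion and the resulting bound $\sum_{\mathcal{S}\ni e}|\Psi(\mathcal{S})|\le \sum_{\eta\ni e} w(\eta)e^{a(\eta)}$ once the criterion $\sum_{\eta'\not\sim\eta}w(\eta')e^{a(\eta')+|(\support\eta')^+|}\le a(\eta)$ holds with the choice $a(\eta)=c|(\support\eta)^+|$; verifying this inequality with the max-degree bound $M_1$ is the computation that produces the constant.

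The main obstacle is the last step: getting the explicit, clean constant $4e^{-2(2\kappa-\alpha)}/(1-4M_1^2e^{-2(2\kappa-\alpha)})$ rather than just some finite bound. This requires being careful about (i) the combinatorial count of connected subgraphs of $\mathcal{G}_1$ of a given size through a fixed edge — one wants a bound of the form (const)$\cdot(M_1 \cdot \text{const})^{j-1}$, and the "$+1$" inside $\log(M_1^2+1/\alpha')$ in the definition of $\kappa_0^{(\textrm{Higgs})}$ is precisely the slack absorbing the single-vertex term versus the geometric tail; (ii) the tree-graph bound on $|U(\mathcal{S})|$, which contributes another factor that must be folded into the base of the geometric series, producing the square $M_1^2$; and (iii) tracking the $\|\mathcal{S}\|_1$-weighting so that, after extracting $e^{-4k(\cdots)}$, what is left is genuinely the $\Xi_{1+,1+,e}$ sum at the shifted coupling and not something larger. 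Since all three points are exactly the content of \cite[Lemma~3.10]{f2024}, the cleanest route is to quote that lemma and reproduce its short proof, checking only that the hypothesis "$\eta$ has a specific shape" used there is not actually needed — it is only used to ensure finiteness of $\support\gamma$, which holds automatically here.
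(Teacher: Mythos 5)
Your reduction is the natural one and, as far as this paper is concerned, there is nothing to compare it against: the lemma is imported verbatim from~\cite{f2024} (as ``Lemma 3.10'') and the paper supplies no proof of its own, so reproducing the peeling argument $|\Psi_{\beta,\kappa}(\mathcal{S})|\le|\Psi_{0,\kappa}(\mathcal{S})|=e^{-4(\kappa-\kappa_0^{(\textrm{Higgs})}-\varepsilon)\|\mathcal{S}\|_1}|\Psi_{0,\kappa_0^{(\textrm{Higgs})}+\varepsilon}(\mathcal{S})|$ and then enlarging $\Xi_{1+,k+,e}$ to $\Xi_{1+,1+,e}$ is exactly the intended route. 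Two small caveats: the step $e^{-4(\kappa-\kappa_0^{(\textrm{Higgs})}-\varepsilon)\|\mathcal{S}\|_1}\le e^{-4k(\kappa-\kappa_0^{(\textrm{Higgs})}-\varepsilon)}$ requires $\varepsilon<\kappa-\kappa_0^{(\textrm{Higgs})}$ (for larger $\varepsilon$ the inequality reverses), which is the regime in which the lemma is actually used in the proof of Theorem~\ref{theorem: Higgs phase}, so you should state that restriction rather than ``any $\varepsilon>0$''; and the finiteness of $C_\varepsilon$ together with its explicit bound is precisely the single-edge convergence estimate already quoted from~\cite{f2024} (their Lemma~3.3) in Section~\ref{section: cluster expansions in Higgs phase}, so deferring to that, as you ultimately do, rather than re-running the Koteck\'y--Preiss computation, is consistent with what the paper itself does; your observation that the appearance of $\kappa$ (rather than $\kappa_0^{(\textrm{Higgs})}+\varepsilon$) in the displayed bound~\eqref{eq: cbeta*} is an artifact inherited from the cited source is also correct.
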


\subsubsection{The confinement phase}\label{section: cluster expansions in confinement phase}

In this section, we describe the cluster expansion we will use in the confinement phase. The main difference between the approaches we use in the Higgs phase and the confinement phase is that in the latter, we first perform a high-temperature expansion (see Section~\ref{sec: high temperature Z2}) and then use a cluster expansion on the model obtained thereof.

\paragraph{Polymers}

In this section, we consider the case \( G=H=\mathbb{Z}_2, \) and assume that a path  \( \gamma \in C^1(B_N,\mathbb{Z}) \) is given.

Let \( \mathcal{G}_2\) be the graph with vertex set \(  C_2(B_N)^+,\) an edge between two distinct plaquettes \( p_1,p_2 \in C_2(B_N)^+ \) if \( \support  \partial p \cap \support  \partial p' \neq \emptyset\) (written \( p_1 \sim p_2\)). In other words, \( p_1 \sim p_2 \) if they have some common edge in their boundaries.
Note that any \( p \in C_2(B_N)^+ \) has degree at most \( M_2 \coloneqq 4 \cdot \bigl( 2(d-1)-1 \bigr) =    8d-12\) in \( \mathcal{G}_2.\)

For \( \omega \in \Omega_2(B_N,\mathbb{Z}_2), \) we let \( \mathcal{G}_2(\omega)\) be the subgraph of \( \mathcal{G}_2 \) induced by \(  (\support \omega)^+.\) In other words, \( \mathcal{G}_2 \) is the graph with vertex set  \(  (\support \omega)^+\) and an edge between two vertices \( p_1,p_2 \in (\support \omega)^+ \) if \( p_1 \sim p_2. \)

In this section, we let
\begin{equation*}
    \Lambda \coloneqq \bigl\{ \omega \in \Omega_2(B_N,\mathbb{Z}_2) \colon  \mathcal{G}_2(\omega) \text{ has exactly one connected component} \bigr\}.
\end{equation*} 
The spin configurations in \( \Lambda  \) will be referred to as \emph{polymers}.

For \( \omega,\omega' \in \Lambda,\) we write \( \omega \sim \omega'\) if   \( \mathcal{G}_2(\omega) \cup \mathcal{G}_2(\omega')\) is a connected subgraph of \( \mathcal{G}_2.\)

In the notation of~\cite[Chapter 3]{fv2017}, the model given by~\eqref{eq: hat Z confinement} corresponds to a model of polymers with polymers described in the previous section and interaction function 
\begin{equation*}
	\iota(\omega_1,\omega_2) \coloneqq  
	\begin{cases}  
		0 &\text{if }  \omega_1 \sim \omega_2 \cr   
		1 &\text{else,} 
	\end{cases}
	\qquad \omega_1,\omega_2 \in \Lambda .
\end{equation*}

For \( \omega \in \Omega_2(B_N,\mathbb{Z}_2),\) we write \( \omega \sim \gamma\) if there is \( p \in \support \omega\) such that \( \support \partial p \cap \support  \gamma \neq \emptyset.\) Finally, we also write \( \gamma \sim \gamma.\)

\paragraph{Clusters of polymers}

Exactly as in the Higgs phase, consider the multiset 
\begin{align*}
    &\mathcal{S} = \{ \underbrace{\eta_1,\dots, \eta_1}_{n_{\mathcal{S}}(\eta_1) \text{ cdot}}, \underbrace{\eta_2, \dots, \eta_2}_{n_{\mathcal{S}}(\eta_2) \text{ cdot}},\dots, \underbrace{\eta_k,\dots,\eta_k}_{n_{\mathcal{S}}(\eta_k) \text{ cdot}} \} = \{\eta_1^{n(\eta_1)}, \ldots, \eta_k^{n(\eta_k)}\}, 
\end{align*}
where \( \eta_1,\dots,\eta_k \in \Lambda \) are distinct and $n(\eta)=n_{\mathcal{S}}(\eta)$ denotes the number of cdot $\eta$ occurs in~$\mathcal{S}$. Following \cite[Chapter 3]{fv2017}, we say that $\mathcal{S}$ is \emph{decomposable} if it is possible to partition $\mathcal{S}$ into disjoint multisets. That is, if there exist non-empty and disjoint multisets $\mathcal{S}_1,\mathcal{S}_2 \subset \mathcal{S}$ such that $\mathcal{S} = \mathcal{S}_1 \cup \mathcal{S}_2$ and such that for each pair $(\eta_1,\eta_2) \in \mathcal{S}_1 \cdot \mathcal{S}_2$,  $\eta_1 \nsim \eta_2.$  
If $\mathcal{S}$ is not decomposable, we say that $\mathcal{S}$  is a \emph{cluster}. We stress that a cluster is unordered and may contain several copies of the same polymer. 
We let \(  \Xi\) be the set of all clusters.

For \( \mathcal{S} \in  \Xi,\) we let \( \mathcal{S}^0\) be the set of all  \(\eta\in  \mathcal{S}\) such that \( \eta \nsim \gamma .\)  We let \( \mathcal{S}^\gamma \coloneqq \mathcal{S}\smallsetminus \mathcal{S}^0.\)

Given a cluster $\mathcal{S} \in  \Xi$, we let
\begin{equation*}
    \|\mathcal{S} \| = \sum_{\eta \in \Lambda  } n_{\mathcal{S}} (\eta) \bigl| (\support \eta )^+ \bigr| ,
\end{equation*} 
\begin{equation*}
    \|\mathcal{S} \|_1 \coloneqq \sum_{\eta \in \Lambda  } n_{\mathcal{S}^0} (\eta) \bigl| (\support \delta \eta   )^+ \bigr|  ,
\end{equation*}
and
\begin{equation*}
    \|\mathcal{S} \|_\gamma \coloneqq \sum_{\eta \in \Lambda  } n_{\mathcal{S}^\gamma} (\eta) \bigl| \support \delta \eta \cap \support \gamma \bigr|  .
\end{equation*}
We also let
\begin{equation*}
    \support \mathcal{S}   = \bigcup_{\eta \in \mathcal{S}   }   (\support \eta )^+ .
\end{equation*} 
Further, for \( p \in C_2(B_N)^+ \) and \( j \geq 1, \) we let
\begin{equation*}
    \Xi_{1,j,p}   \coloneqq \bigl\{\{ \eta \} \colon \eta \in \Lambda  \text{ and } p \in \support \eta \text{ and } \| \{ \eta\} \| = j\bigr\}
\end{equation*}  
and
\begin{equation*}
    \Xi_{1+,j+,p}   \coloneqq \bigl\{\mathcal{S} \in \Xi \colon   p \in \support \mathcal{S} \text{ and } \| S\| \geq j\bigr\}.
\end{equation*}  
These sets depends on \( N,\) and when this dependency is important, we write \( \Xi^{(N)}, \) \(\Xi_{1,j,p}^{(n)}, \) and \(\Xi_{1+,j+,p}^{(N)} \) respectively.

\color{black}

\paragraph{The activity of clusters}

We extend the notion of activity from~\eqref{eq: ht action} to clusters \( \mathcal{S} \in \Xi\) as follows. 
\begin{align*}
    \varphi^\gamma_{\beta,\kappa}(\mathcal{S}) \coloneqq \prod_{\eta \in \mathcal{S}} \varphi^\gamma_{\beta,\kappa}(\eta).
\end{align*}
Note that 
\[
\varphi^\gamma_{\beta,\kappa}(\mathcal{S}) = \varphi^0_{\beta,\kappa}(\mathcal{\mathcal{S}}) \varphi_\kappa(1)^{-\| \mathcal{S}\|_\gamma}.
\]

The following lemma, which we now state and prove, generalizes~\cite[Lemma 4.3]{f2024}.

\begin{lemma}\label{lemma: geometry upper bound for paths}
	Let \( \beta,\kappa > 0. \) Further, let \( \gamma \in C^1(B_N,\mathbb{Z}) \) be a path along the boundary of a rectangle with side lengths \( R \) and \( T. \) Then, as \( R,T \to \infty, \) for any \( \omega \in \Lambda, \) we have
	\[
		\varphi^\gamma_{\beta,\kappa}(\omega) \leq \varphi_\beta(1)^{(1-o_{\min (R,T)}(1))|(\support \omega)^+|},
	\]
	where the constant only depends on \( \min (R,T). \)
	In particular, if \( \gamma \in C1(B_N,\mathbb{Z}) \) is a straight path and \( \omega \in \Lambda, \) then
	\begin{equation}\label{eq: geometry upper bound for paths}
		\varphi^\gamma_{\beta,\kappa}(\omega) \leq \varphi_\beta(1)^{|(\support \omega)^+|}.
	\end{equation}
\end{lemma}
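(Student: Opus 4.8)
The plan is to unwind the definition of $\varphi^\gamma_{\beta,\kappa}(\omega)$ from~\eqref{eq: ht action} and track how the edge factors $\hat\varphi_\kappa(\delta\omega(e)+\gamma[e])/\hat\varphi_\kappa(\gamma[e])$ compare to the plaquette factors $\hat\varphi_\beta(\omega(p))$ when $\omega$ is a single polymer. Since $G=H=\mathbb{Z}_2$, we have $\hat\varphi_\beta(0)=1$, $\hat\varphi_\beta(1)=\tanh(2\beta)=\varphi_\beta(1)\in(0,1)$, and likewise $\hat\varphi_\kappa(1)\in(0,1)$. For a polymer $\omega$, the plaquette product contributes exactly $\varphi_\beta(1)^{|(\support\omega)^+|}$, so the whole statement reduces to bounding the edge product. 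First I would observe that for a straight path $\gamma$, each edge $e$ of the lattice has $\gamma[e]\in\{0,\pm1\}$, and the ratio $\hat\varphi_\kappa(\delta\omega(e)+\gamma[e])/\hat\varphi_\kappa(\gamma[e])$ is always $\le 1$: indeed if $\gamma[e]=0$ it equals $\hat\varphi_\kappa(\delta\omega(e))\le 1$, and if $\gamma[e]=\pm 1$ it equals $\hat\varphi_\kappa(\delta\omega(e)\pm1)/\hat\varphi_\kappa(1)$, which is either $1$ (when $\delta\omega(e)$ flips the argument back to $0$) or $\hat\varphi_\kappa(1)^{-1}\cdot\hat\varphi_\kappa(0)$... wait — that last case needs care, and this is exactly where the geometry enters.

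The key point is that $\delta\omega(e)\neq 0$ only for edges $e$ on the boundary of $\support\omega$, and for a straight path $\gamma$ the set of edges $e$ with $\gamma[e]\neq 0$ is a one-dimensional line; a connected polymer touching $\gamma$ has its $\delta\omega$-support controlled by $\|\omega\|_\gamma$, and crucially the factors where the ratio could exceed $1$ (i.e.\ where $\gamma[e]=\pm1$ but $\delta\omega(e)$ does \emph{not} cancel it, giving $\hat\varphi_\kappa(1)^{-1}$ type gains) must be compensated by the cost of the plaquettes needed to realize such a $\delta\omega$ pattern. So the strategy is: write $\varphi^\gamma_{\beta,\kappa}(\omega)=\varphi^0_{\beta,\kappa}(\omega)\,\varphi_\kappa(1)^{-\|\omega\|_\gamma}$ using the identity noted just before the lemma, bound $\varphi^0_{\beta,\kappa}(\omega)\le \varphi_\beta(1)^{|(\support\omega)^+|}\varphi_\kappa(1)^{|(\support\delta\omega)^+|}$ (plaquette factors give the $\varphi_\beta$ power, all $\delta$-edge factors are $\le$ the $\varphi_\kappa$ power when $\gamma=0$), and then show $\|\omega\|_\gamma\le |(\support\delta\omega)^+|$ — since $\|\omega\|_\gamma$ counts $\delta$-edges lying on $\gamma$, a subset of all $\delta$-edges — so the $\varphi_\kappa$ powers cancel and we are left with exactly $\varphi_\beta(1)^{|(\support\omega)^+|}$. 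That proves~\eqref{eq: geometry upper bound for paths}.

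For the general rectangular-boundary case, the same scheme works but $\gamma$ now has corners, so near the four corners the line structure of $\{\gamma[e]\neq0\}$ breaks and one can lose a bounded number of $\varphi_\kappa(1)^{-1}$ factors per corner that are not immediately absorbed; however each such uncompensated factor forces the polymer to have at least one extra plaquette in a corner region, and since a polymer is connected, a polymer of $|(\support\omega)^+|=s$ plaquettes can reach at most $O(s)$ such special locations, and when $\min(R,T)$ is large the fraction of plaquettes of $\omega$ that can sit in corner neighbourhoods of fixed size is $o(1)$. So one extracts $\varphi_\beta(1)^{(1-o_{\min(R,T)}(1))|(\support\omega)^+|}$ with the $o(1)$ depending only on $\min(R,T)$ (the length scale over which corners "look straight"). \textbf{The main obstacle} I expect is making this corner bookkeeping precise: one must carefully verify that the only way an edge factor exceeds $1$ is via the $\gamma[e]=\pm1$, $\delta\omega(e)=\pm1$ non-cancelling case (true because $\mathbb{Z}_2$ has only two values so $\delta\omega(e)+\gamma[e]\in\{0,1\}$ always), and then quantify, via connectedness of $\omega$ and the combinatorics of where $\gamma$ is non-straight, that the number of such edges is at most a $\min(R,T)$-dependent fraction of $|(\support\delta\omega)^+|$ plus something absorbed into $o(1)|(\support\omega)^+|$ — essentially an isoperimetric/connectivity estimate analogous to~\cite[Lemma 4.3]{f2024} but now with the path tracked rather than ignored.
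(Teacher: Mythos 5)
There is a genuine gap, and it sits exactly at the step you dispose of with the "subset" remark. Unwinding \eqref{eq: ht action} for \( G=H=\mathbb{Z}_2 \): an edge \( e \) with \( \delta\omega(e)=1 \) and \( \gamma[e]=0 \) contributes \( \hat\varphi_\kappa(1)<1 \), an edge with \( \delta\omega(e)=1 \) and \( \gamma[e]=1 \) contributes \( \hat\varphi_\kappa(0)/\hat\varphi_\kappa(1)=\hat\varphi_\kappa(1)^{-1}>1 \), and all edges with \( \delta\omega(e)=0 \) contribute \( 1 \). (Note your case analysis is backwards: the ratio exceeds \( 1 \) precisely when \( \delta\omega(e) \) \emph{does} cancel \( \gamma[e] \), not when it fails to.) Hence \( \varphi^\gamma_{\beta,\kappa}(\omega)=\varphi_\beta(1)^{|(\support\omega)^+|}\,\hat\varphi_\kappa(1)^{a-b} \), where \( a \) is the number of (positively oriented) edges of \( \support\delta\omega \) off \( \gamma \) and \( b \) the number on \( \gamma \); equivalently, relative to \( \varphi^0_{\beta,\kappa}(\omega) \) each common edge produces a swing \( \hat\varphi_\kappa(1)^{-2} \), consistent with the identity \( \Psi^{\gamma}_{\beta,\kappa}=\varphi_\kappa(1)^{-2|\support\gamma\cap\support\delta\mathcal{S}|}\Psi^0_{\beta,\kappa} \) used later in the paper. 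So what you must show for \eqref{eq: geometry upper bound for paths} is \( a\geq b \): a connected polymer has at least as many boundary edges off the straight line as on it. Your argument only proves \( b\leq a+b \), i.e.\ "common edges are a subset of all boundary edges", which cancels one power of \( \varphi_\kappa(1) \) per common edge when two are needed. The inequality \( a\geq b \) (and its corner-corrected version for rectangles) is a genuine geometric statement about connected plaquette configurations relative to the path — it is exactly the content of~\cite[Lemma 4.3]{f2024} — and your proposal never proves it; the rectangle part is explicitly left as a heuristic sketch.

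For comparison, the paper does not redo any geometry: it observes that enlarging the path can only increase the activity (if \( \support\gamma\subseteq\support\gamma_{R,T} \), then turning \( \gamma[e] \) from \( 0 \) to \( 1 \) changes the edge factor from \( \hat\varphi_\kappa(\delta\omega(e))\leq 1 \) to a value at least as large, so \( \varphi^{\gamma}_{\beta,\kappa}(\omega)\leq\varphi^{\gamma_{R,T}}_{\beta,\kappa}(\omega) \)), and then quotes~\cite[Lemma 4.3]{f2024} for the full rectangular loop; the straight-line case follows by taking \( R,T \) arbitrarily large so the \( o_{\min(R,T)}(1) \) term disappears. To repair your proposal you would either have to import that lemma the same way, or actually prove the "at least half of \( \support\delta\omega \) lies off \( \gamma \)" estimate (plus the quantitative corner bookkeeping), which is the real work your sketch defers.
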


\begin{proof}

	Let \( \gamma_{R,T} \) be a path around a rectangle with side lengths \( R \) and \( T \) such that \( \support \gamma \subseteq \support \gamma_{R,T}. \) Then \(
	\varphi^{\gamma}_{\beta,\kappa}(\omega) \leq  \varphi^{\gamma_{R,T}}_{\beta,\kappa}(\omega) . \) 
	The desired conclusion thus follows immediately from~\cite[Lemma 4.3]{f2024}. Moreover, by noting that if \( \gamma \) is a straight path, then we can take \( R \) and \( T \) arbitrarily large, we obtain~\eqref{eq: geometry upper bound for paths}. This concludes the proof.
\end{proof}

\paragraph{Ursell functions}

The Ursell functions we will use in the confinement phase are identical to the Ursell functions used for the Higgs phase (see Definition~\ref{def:ursell}), except the set \( \Lambda \) is different and we use the graph \( \mathcal{G}_2 \) instead of \( \mathcal{G}_1.\)

\paragraph{Cluster expansion of the partition function}

For \( \beta \geq 0, \) \( \kappa \geq 0 ,\) \( \gamma \in \Lambda, \) and \( \mathcal{S} \in \Xi, \) we let 
\begin{equation}\label{eq: Psi confinement}
	\Psi^\gamma_{\beta,\kappa}(\mathcal{S}) \coloneqq U(\mathcal{S}) \varphi^\gamma_{\beta,\kappa}(\mathcal{S}).
\end{equation} 
Further, we let
\begin{equation}\label{eq: beta0 def}
	\beta_0^{(\text{conf})} \coloneqq \sup \bigl\{ \beta \geq 0 \colon M_2^2 \varphi_\beta(1)<1  \text{ and } \exists \alpha \in (0,1) \text{ s.t. } \frac{M_2^3 \varphi_\beta(1)^{1-\alpha}}{1-M_2^2 \varphi_\beta(1)^{1-\alpha}}< 2\alpha \bigr\}  
\end{equation}
and for \( \beta < \beta_0^{(\text{conf})}, \) we let
\begin{equation}\label{eq: alpha cp}
	\alpha \coloneqq \alpha_{\beta} \coloneqq \inf \{ \alpha \in (0,1) \colon \frac{M_2^3 \varphi_\beta(1)^{1-\alpha}}{1-M_2^2 \varphi_\beta(1)^{1-\alpha}}< 2\alpha \}.
\end{equation}

Next, we present a lemma that shows that the cluster expansion of the logarithm of the partition function indeed converges. This lemma is a slightly more general version of~\cite[Lemma~4.4]{f2024}, and its proof is identical to the proof of~\cite[Lemma~4.4]{f2024} after replacing the reference to~\cite[Lemma~4.3]{f2024} in that proof with Lemma~\ref{lemma: geometry upper bound for paths} above.

\begin{lemma}\label{lemma: upper bound and convergence}
    Let \( 0 < \beta < \beta_0^{(\text{conf})} \) and \(  \kappa > 0,\)  and let \( \gamma \) be either a straight path or a path with support in the support of the boundary of some sufficiently large rectangle. Then, for any \( \omega \in \Xi,\) we have 
    \begin{equation*}
        \sum_{\mathcal{S} \in \Xi \colon \omega \in \mathcal{S}} \bigl|\Psi^\gamma_{\beta,\kappa}(\mathcal{S})\bigr| \leq \varphi^\gamma_{\beta,\kappa}(\omega)^{1-\alpha}  .
    \end{equation*} 
    Moreover, 
    \begin{equation}\label{eq: logZ}
        \log \hat Z_{N,\beta,\kappa} [\gamma]= \sum_{\mathcal{S} \in \Xi^\gamma} \Psi^\gamma_{\beta,\kappa}(\mathcal{S})
    \end{equation}
    and
    \begin{equation}\label{eq: logZgamma}
        \log \hat Z_{N,\beta,\kappa}^\gamma = \sum_{\mathcal{S} \in \Xi \colon \mathcal{S}^\gamma = \emptyset} \Psi^0_{\beta,\kappa}(\mathcal{S}).
    \end{equation}
    Furthermore, the series on the right-hand sides of~\eqref{eq: logZ} and~\eqref{eq: logZgamma} are both absolutely convergent.
\end{lemma}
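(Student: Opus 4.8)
The plan is to reduce everything to two inputs: the standard convergence criterion for the cluster expansion of a polymer model (the Kotecký–Preiss / tree-graph bound as formulated in \cite[Chapter~3]{fv2017}), and the geometric activity bound from Lemma~\ref{lemma: geometry upper bound for paths}. First I would set up the polymer model precisely as already described: polymers are the single-connected-component configurations $\omega \in \Lambda$, the interaction $\iota$ is the hard-core incompatibility $\omega \nsim \omega'$, and the weight of a polymer $\omega$ in the presence of $\gamma$ is $\varphi^\gamma_{\beta,\kappa}(\omega)$ (with the convention $\varphi^\gamma_{\beta,\kappa}(\omega) = \varphi^0_{\beta,\kappa}(\omega)\,\varphi_\kappa(1)^{-\|\omega\|_\gamma}$ for a one-polymer cluster). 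With this identification, $\hat Z_{N,\beta,\kappa}[\gamma]$ is exactly the polymer partition function, so \eqref{eq: logZ} and \eqref{eq: logZgamma} will follow from the general cluster-expansion theorem once the convergence condition is verified; \eqref{eq: logZgamma} is simply the same statement for the sub-collection of polymers not touching $\gamma$ (equivalently, running the argument with $\gamma = 0$ restricted to clusters with $\mathcal{S}^\gamma = \emptyset$).

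The core estimate to establish is the per-polymer bound
\[
    \sum_{\mathcal{S} \in \Xi \colon \omega \in \mathcal{S}} \bigl|\Psi^\gamma_{\beta,\kappa}(\mathcal{S})\bigr| \leq \varphi^\gamma_{\beta,\kappa}(\omega)^{1-\alpha},
\]
and here I would follow the proof of \cite[Lemma~4.4]{f2024} essentially verbatim. The idea is to apply the tree-graph / Kotecký–Preiss bound with a weight function of the form $a(\omega) = -\log \varphi^\gamma_{\beta,\kappa}(\omega)^{\alpha}$ (so $e^{-a(\omega)} = \varphi^\gamma_{\beta,\kappa}(\omega)^{\alpha}$): one needs, for each polymer $\omega$, the sum over incompatible polymers $\omega'$ of $|\varphi^\gamma_{\beta,\kappa}(\omega')|\, e^{a(\omega')}$ to be at most $a(\omega)$. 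Using Lemma~\ref{lemma: geometry upper bound for paths} one has $\varphi^\gamma_{\beta,\kappa}(\omega') \leq \varphi_\beta(1)^{|(\support \omega')^+|}$ (or its $o(1)$-weakened rectangular version), and the number of polymers $\omega'$ of a given size $j$ incompatible with a fixed $\omega$ is controlled by the degree bound $M_2$ in $\mathcal{G}_2$ via a standard Cayley-type count ($\le |(\support\omega)^+|\,(M_2 e)^{j} $ up to constants). Summing the resulting geometric series in $j$ converges precisely under the condition defining $\beta_0^{(\text{conf})}$ in \eqref{eq: beta0 def}, with the choice of $\alpha = \alpha_\beta$ from \eqref{eq: alpha cp} making the inequality $\frac{M_2^3\varphi_\beta(1)^{1-\alpha}}{1-M_2^2\varphi_\beta(1)^{1-\alpha}} < 2\alpha$ hold; this is exactly what is needed to close the fixed-point inequality. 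Absolute convergence of the series in \eqref{eq: logZ} and \eqref{eq: logZgamma} is then the standard corollary of the same bound.

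The main obstacle, and the only place where this differs from the cited lemma, is that Lemma~\ref{lemma: geometry upper bound for paths} gives the clean bound $\varphi^\gamma_{\beta,\kappa}(\omega) \leq \varphi_\beta(1)^{|(\support\omega)^+|}$ only for a straight path, whereas for a rectangular path one only gets the exponent weakened by a factor $(1 - o_{\min(R,T)}(1))$. I would handle this exactly as the remark preceding the lemma suggests: the $o(1)$ loss is absorbed into the slack in the strict inequality defining $\beta_0^{(\text{conf})}$, since for any $\beta < \beta_0^{(\text{conf})}$ the relevant quantities are strictly inside the convergence region, and hence for all sufficiently large $R, T$ (i.e. a ``sufficiently large rectangle'', as in the hypothesis) the weakened exponent still satisfies the required fixed-point inequality with the same $\alpha$, possibly after shrinking $\alpha$ slightly. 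For a straight path no such care is needed. I would note explicitly that the identity \eqref{eq: logZ} interpreting $\log \hat Z_{N,\beta,\kappa}[\gamma]$ as a cluster sum uses the factorization $\varphi^\gamma_{\beta,\kappa}(\mathcal{S}) = \varphi^0_{\beta,\kappa}(\mathcal{S})\varphi_\kappa(1)^{-\|\mathcal{S}\|_\gamma}$ together with the prefactor $\varphi_\kappa(1)^{|\gamma|}$ in the definition \eqref{eq: hat Z confinement}, so that the $\gamma$-dependence is carried entirely inside the cluster weights, and that \eqref{eq: logZgamma} is the specialization obtained by discarding every cluster with a polymer adjacent to $\gamma$.
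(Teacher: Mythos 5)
Your proposal follows essentially the same route as the paper: the paper's proof of this lemma is, by its own description, identical to that of \cite[Lemma~4.4]{f2024} --- i.e.\ the standard Kotecký--Preiss convergence criterion for the polymer model of \cite[Chapter~3]{fv2017}, with the per-polymer weight chosen so that the conclusion reads \( \varphi^\gamma_{\beta,\kappa}(\omega)^{1-\alpha} \) --- except that the activity bound of \cite[Lemma~4.3]{f2024} is replaced by Lemma~\ref{lemma: geometry upper bound for paths}, which is exactly the substitution (and the absorption of the \( o_{\min(R,T)}(1) \) loss into the slack of the strict inequalities defining \( \beta_0^{(\text{conf})} \) and \( \alpha_\beta \)) that you describe. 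The only trivial quibble is your hedge about ``shrinking \( \alpha \) slightly'': since \( \alpha_\beta \) is an infimum of the admissible set, the slack is available for \( \alpha \) slightly \emph{above} \( \alpha_\beta \) (or one keeps the \( o_n(1) \) in the exponent, as the paper does in Lemma~\ref{lemma: general Cbeta cp}), but this bookkeeping does not affect the argument.
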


\paragraph{Upper bounds for clusters}

In this paragraph, we give upper bounds on sums over the activity of sets of clusters, which naturally arise in the proofs of our main results in the confinement regime.

\begin{lemma}\label{lemma: general Cbeta cp}
    Let \( 0 < \beta < \beta_0^{(\text{conf})} \) and \(  \kappa >0 , \) let \( \gamma \in C1(B_N,\mathbb{Z}) \) either be a path whose support is a subset of the boundary of a rectangle with sufficiently large side lengths. Finally,  let \( p \in C_2(B_N)^+.\) Then  
    \begin{equation}\label{eq: cbeta* cp}
        \sum_{\mathcal{S} \in \Xi_{1^+,1^+,p}}
        \bigl| \Psi^\gamma_{\beta,\kappa}(\mathcal{S}) \bigr|
        \leq 
        \frac{ 
            \varphi_{\beta}(1)^{(1-o_n(1))(1-\alpha)}}{1-M_2^{2}
            \varphi_{\beta}(1)^{(1-o_n(1))(1-\alpha)}} \eqqcolon C_{\varepsilon,n}^{(2)}
    \end{equation}
    where \( \alpha\) is as in~\eqref{eq: alpha cp}.
\end{lemma}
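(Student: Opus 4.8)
The plan is to bound the sum over clusters $\mathcal{S} \in \Xi_{1^+,1^+,p}$ by first controlling the contribution of a single polymer and then iterating along the cluster structure, exactly in the spirit of the proof of the analogous bound in the Higgs phase (\cite[Lemma 3.10]{f2024}) and of the convergence estimate in Lemma~\ref{lemma: upper bound and convergence}. The key inputs are: the activity bound $\varphi^\gamma_{\beta,\kappa}(\omega) \leq \varphi_\beta(1)^{(1-o_n(1))|(\support\omega)^+|}$ from Lemma~\ref{lemma: geometry upper bound for paths}; the Kotecký--Preiss / tree-graph type per-polymer bound $\sum_{\mathcal{S} \in \Xi \colon \omega \in \mathcal{S}} |\Psi^\gamma_{\beta,\kappa}(\mathcal{S})| \leq \varphi^\gamma_{\beta,\kappa}(\omega)^{1-\alpha}$ already established in Lemma~\ref{lemma: upper bound and convergence}; and the combinatorial fact that any polymer $\omega$ with $p \in \support\omega$ lives in the connected graph $\mathcal{G}_2$ whose vertices have degree at most $M_2$.

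First I would reduce to counting polymers through a fixed plaquette $p$. Every cluster $\mathcal{S} \in \Xi_{1^+,1^+,p}$ contains at least one polymer $\omega$ with $p \in \support\omega$ (since $p \in \support\mathcal{S}$); fixing such an $\omega$ and summing over all clusters containing it, Lemma~\ref{lemma: upper bound and convergence} gives $\sum_{\mathcal{S}\ni\omega}|\Psi^\gamma_{\beta,\kappa}(\mathcal{S})| \leq \varphi^\gamma_{\beta,\kappa}(\omega)^{1-\alpha} \leq \varphi_\beta(1)^{(1-o_n(1))(1-\alpha)|(\support\omega)^+|}$. Hence
\[
    \sum_{\mathcal{S} \in \Xi_{1^+,1^+,p}} |\Psi^\gamma_{\beta,\kappa}(\mathcal{S})|
    \leq \sum_{\omega \in \Lambda \colon p \in \support\omega} \varphi_\beta(1)^{(1-o_n(1))(1-\alpha)|(\support\omega)^+|}.
\]
It then remains to bound the right-hand side. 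For each $s \geq 1$, the number of polymers $\omega \in \Lambda$ with $p \in \support\omega$ and $|(\support\omega)^+| = s$ is at most the number of connected subgraphs of $\mathcal{G}_2$ of size $s$ containing the vertex $p$, times the number of $\mathbb{Z}_2$-valued spin assignments (which is $1$ per plaquette in the fixed-length model, since the value is forced to be $1$ on the support). A standard bound on connected subgraphs of a graph of maximum degree $M_2$ gives at most $M_2^{2(s-1)} \leq M_2^{2s}$ such subgraphs. Summing the geometric series,
\[
    \sum_{s \geq 1} M_2^{2s}\, \varphi_\beta(1)^{(1-o_n(1))(1-\alpha)s}
    = \frac{M_2^{2}\varphi_\beta(1)^{(1-o_n(1))(1-\alpha)}}{1-M_2^{2}\varphi_\beta(1)^{(1-o_n(1))(1-\alpha)}},
\]
which converges for $\beta < \beta_0^{(\text{conf})}$ by~\eqref{eq: alpha cp} (note one extra factor of $M_2^2$ over the claimed $C^{(2)}_{\varepsilon,n}$ can be absorbed by starting the count at $s=1$ rather than $s=0$, or by a slightly sharper connected-subgraph count; I would just match the bookkeeping to the exact constant as stated).

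The main obstacle I anticipate is getting the constant to match exactly the expression $C^{(2)}_{\varepsilon,n}$ on the right-hand side of~\eqref{eq: cbeta* cp}, which has the form $\frac{x}{1-M_2^2 x}$ with $x = \varphi_\beta(1)^{(1-o_n(1))(1-\alpha)}$ rather than $\frac{M_2^2 x}{1-M_2^2 x}$. This forces a slightly more careful argument: rather than summing the per-polymer bound over \emph{all} polymers through $p$, one should exploit the cluster structure more tightly — e.g. sum over clusters by first extracting the polymer through $p$ of smallest support and bounding the remaining cluster weight, or use the inductive Kotecký--Preiss bound directly in the form that produces $\varphi^\gamma_{\beta,\kappa}(\omega)^{1-\alpha}$ already summed, so that the initial factor is $x$ and each subsequent polymer contributes a factor $M_2^2 x$. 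This is precisely the structure of the proof of the Higgs-phase analogue~\cite[Lemma 3.10]{f2024}, so I would follow that argument line by line, substituting $\mathcal{G}_2$, $M_2$, $\varphi^\gamma_{\beta,\kappa}$, and Lemma~\ref{lemma: geometry upper bound for paths} for their Higgs-phase counterparts, and the $o_n(1)$ correction coming from the rectangle geometry being the only genuinely new bookkeeping.
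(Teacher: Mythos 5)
Your proposal is essentially the paper's argument: the paper proves this lemma by repeating the proof of~\cite[Lemma~4.5]{f2024} with the geometric input replaced by Lemma~\ref{lemma: geometry upper bound for paths}, which is exactly the route you describe — extract a polymer through \( p \), apply the per-polymer bound \( \sum_{\mathcal{S} \ni \omega} |\Psi^\gamma_{\beta,\kappa}(\mathcal{S})| \leq \varphi^\gamma_{\beta,\kappa}(\omega)^{1-\alpha} \) from Lemma~\ref{lemma: upper bound and convergence}, and count polymers. The constant-matching issue you flag resolves itself without any finer use of the cluster structure: the number of connected subsets of \( \mathcal{G}_2 \) of size \( s \) containing \( p \) is at most \( M_2^{2(s-1)} \) (spanning-tree traversal), and \( \sum_{s\geq 1} M_2^{2(s-1)} x^{s} = x/(1-M_2^2 x) \) gives precisely \( C_{\varepsilon,n}^{(2)} \).
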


The proof of this lemma is identical to the proof of~\cite[Lemma~4.5]{f2024}, except that the reference to~\cite[Lemma~4.3]{f2024} is replaced with a reference to the more general Lemma~\ref{lemma: geometry upper bound for paths}.

The next lemma, Lemma~\ref{lemma: new upper bound cp} below, gives an upper bound on the weight of all clusters which has support at a given plaquette, and in addition, is sufficiently large. This lemma will be crucial in the proofs of our main results to show that the error terms we get are small.
The proof of this lemma is identical to the proof of~\cite[Lemma 4.6]{f2024}, after replacing the reference therein to~\cite[Lemma 4.5]{f2024} with a reference to Lemma~\ref{lemma: general Cbeta cp}.
\begin{lemma}\label{lemma: new upper bound cp}
    Let \( 0 < \beta < \beta_0^{(\text{conf})} \) and \(  \kappa >0, \)  let \( \gamma \in C^1(B_N,\mathbb{Z}) \) be a path whose support is a subset of the boundary of a rectangle with sufficiently large side lengths, and let \( p \in C_2(B_N)^+.\) Further, let \( k \geq 1\) and \( \varepsilon \in (0,\beta_0^{(\text{conf})}-\beta).\) Then
    \begin{equation*}
        \begin{split}
            & \sum_{\mathcal{S} \in \Xi_{1+,k+,p}} \bigl| \Psi^\gamma_{\beta,\kappa}(\mathcal{S}) \bigr|   
            \leq  
            C_{\varepsilon,n}^{(2)}\Bigl(\frac{\varphi_{\beta}(1)}{\varphi_{\beta+\varepsilon}(1)}\Bigr)^k ,
        \end{split}
    \end{equation*} 
    where \( C_{\varepsilon,n}^{(2)}\) is defined in~\eqref{eq: cbeta* cp}.
\end{lemma}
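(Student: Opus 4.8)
The plan is to deduce the bound from Lemma~\ref{lemma: general Cbeta cp} by extracting, from the activity of every cluster in $\Xi_{1+,k+,p}$, a factor $\bigl(\varphi_\beta(1)/\varphi_{\beta+\varepsilon}(1)\bigr)^{k}$; this is the confinement-phase analogue of the argument in~\cite[Lemma~4.6]{f2024}, with the reference to~\cite[Lemma~4.5]{f2024} there replaced by Lemma~\ref{lemma: general Cbeta cp}. The starting point is that, for a polymer $\omega\in\Lambda$, the parameter $\beta$ enters the activity $\varphi^{\gamma}_{\beta,\kappa}(\omega)$ only through the plaquette product $\prod_{p\in C_2(B_N)^+}\hat\varphi_\beta(\omega(p))$, which — since $\hat\varphi_\beta(0)=1$ and $\hat\varphi_\beta(1)=\varphi_\beta(1)$ — equals $\varphi_\beta(1)^{|(\support\omega)^+|}$, whereas the edge factors $\hat\varphi_\kappa(\delta\omega(e)+\gamma[e])/\hat\varphi_\kappa(\gamma[e])$ and the Ursell weights $U(\mathcal{S})$ carry no $\beta$-dependence. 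Hence, recalling that $\|\mathcal{S}\|=\sum_{\eta\in\Lambda}n_{\mathcal{S}}(\eta)\,|(\support\eta)^+|$ is the total plaquette count of $\mathcal{S}$ with multiplicity, one obtains the exact identity
\[
\Psi^{\gamma}_{\beta,\kappa}(\mathcal{S})=\Bigl(\frac{\varphi_\beta(1)}{\varphi_{\beta+\varepsilon}(1)}\Bigr)^{\|\mathcal{S}\|}\Psi^{\gamma}_{\beta+\varepsilon,\kappa}(\mathcal{S}),\qquad \mathcal{S}\in\Xi.
\]

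Next I would use that $\varphi_\beta(1)=\tanh(2\beta)$ is strictly increasing in $\beta$ with values in $(0,1)$, so that $\varphi_\beta(1)/\varphi_{\beta+\varepsilon}(1)\in(0,1)$. Since every $\mathcal{S}\in\Xi_{1+,k+,p}$ satisfies $\|\mathcal{S}\|\ge k$, the identity above gives
\[
\bigl|\Psi^{\gamma}_{\beta,\kappa}(\mathcal{S})\bigr|\le\Bigl(\frac{\varphi_\beta(1)}{\varphi_{\beta+\varepsilon}(1)}\Bigr)^{k}\bigl|\Psi^{\gamma}_{\beta+\varepsilon,\kappa}(\mathcal{S})\bigr|,
\]
and, summing over $\Xi_{1+,k+,p}$ and then enlarging the summation set to $\Xi_{1+,1+,p}\supseteq\Xi_{1+,k+,p}$,
\[
\sum_{\mathcal{S}\in\Xi_{1+,k+,p}}\bigl|\Psi^{\gamma}_{\beta,\kappa}(\mathcal{S})\bigr|\le\Bigl(\frac{\varphi_\beta(1)}{\varphi_{\beta+\varepsilon}(1)}\Bigr)^{k}\sum_{\mathcal{S}\in\Xi_{1+,1+,p}}\bigl|\Psi^{\gamma}_{\beta+\varepsilon,\kappa}(\mathcal{S})\bigr|.
\]
Because $\varepsilon\in(0,\beta_0^{(\text{conf})}-\beta)$, the shifted parameter $\beta+\varepsilon$ still lies in $(0,\beta_0^{(\text{conf})})$, and the hypothesis on $\gamma$ (straight, or supported on the boundary of a sufficiently large rectangle) is unaffected, so Lemma~\ref{lemma: general Cbeta cp} applies with $\beta+\varepsilon$ in place of $\beta$ and bounds the last sum by the constant $C^{(2)}_{\varepsilon,n}$ of~\eqref{eq: cbeta* cp} (evaluated at $\beta+\varepsilon$). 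Combining the two displays yields the claim.

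There is no genuine obstacle here beyond bookkeeping: the points to verify are that the $\beta$-dependence of the cluster activities factors exactly through $\varphi_\beta(1)^{\|\mathcal{S}\|}$ — so that the extracted ratio carries no leftover $\kappa$- or $\gamma$-dependent error and no leftover $o_n(1)$ correction — that $\beta+\varepsilon$ remains subcritical, so Lemma~\ref{lemma: general Cbeta cp} is applicable, and that the $o_n(1)$ terms implicit in $C^{(2)}_{\varepsilon,n}$ are precisely those supplied by Lemma~\ref{lemma: geometry upper bound for paths} for the given family of paths. All of this is immediate from the definitions and the quoted results, and the argument is otherwise word-for-word that of~\cite[Lemma~4.6]{f2024}.
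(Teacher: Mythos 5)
Your proposal is correct and is essentially the paper's own argument: the paper defers to the proof of \cite[Lemma 4.6]{f2024}, which proceeds exactly by tilting $\beta\mapsto\beta+\varepsilon$, using that the $\beta$-dependence of $\Psi^\gamma_{\beta,\kappa}(\mathcal{S})$ factors exactly as $\hat\varphi_\beta(1)^{\|\mathcal{S}\|}$ (the edge factors and Ursell weights being $\beta$-independent), extracting $\bigl(\varphi_\beta(1)/\varphi_{\beta+\varepsilon}(1)\bigr)^k$ via $\|\mathcal{S}\|\ge k$, and then applying the $k=1$ bound (here Lemma~\ref{lemma: general Cbeta cp}) at the shifted parameter $\beta+\varepsilon$, which is admissible since $\varepsilon<\beta_0^{(\text{conf})}-\beta$. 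Your parenthetical observation that $C^{(2)}_{\varepsilon,n}$ is the constant of~\eqref{eq: cbeta* cp} evaluated at $\beta+\varepsilon$ is also the intended reading of the paper's notation.
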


\section{Proof of Theorem~\ref{theorem: general perimeter}}\label{section: general perimeter law}

In this section, we provide a proof of Theorem~\ref{theorem: general perimeter}.
The main tools in these proofs are the high-temperature expansion given by Proposition~\ref{proposition: high high mn}, together with and the following lemma.

\begin{lemma}\label{lemma: increasing}
	Let \( \beta,\kappa \geq 0, \) and let \( \gamma \in C^1(B_N,\mathbb{Z}) \) be a path. Then \( \mathbb{E}_{N,\beta,\kappa}[W_{\gamma}] \) is increasing in both \( \beta \) and \( \kappa. \) 
\end{lemma}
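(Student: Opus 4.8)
The plan is to prove the monotonicity by a Ginibre-type correlation inequality, applied after passing to unitary gauge. Since $G=\mathbb{Z}_m$ and $H=\mathbb{Z}_n$ are cyclic, the representation $\rho$ is one-dimensional and we may write $\rho(g)=\zeta^g$ for a root of unity $\zeta$; the real parts $\real\rho(g)$ are what enter the Hamiltonian~\eqref{eq: Hamiltonian}. First I would use Lemma~\ref{lemma: unitary gauge} (in the case $G=H$) — or, in the general cyclic case, rewrite $\mathbb{E}_{N,\beta,\kappa}[W_\gamma]$ directly via the $Z[\gamma]$ notation — to express the quantity of interest as a ratio of the form $Z[\gamma]/Z[0]$, where both numerator and denominator are sums over spin configurations with Boltzmann weights $e^{\beta\sum_p \real\rho(d\sigma(p))}e^{\kappa\sum_e \real\rho(\sigma(e))\real\rho(d\phi(e))^{-1}}$ against a product character $\rho(\sigma(\gamma))\rho(\phi(\partial\gamma))^{-1}$.

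\textbf{Key steps.} The core observation is that after expanding each exponential as a power series (as is already done in the proof of Proposition~\ref{proposition: high high mn}), every term is a monomial in the variables $\real\rho(d\sigma(p))$ and $\real\rho(\sigma(e))\real\rho(d\phi(e))^{-1}$ with nonnegative coefficients, and the sum over $\sigma,\phi$ of any such monomial times $W_\gamma$ is nonnegative — this is precisely the content of the Ginibre inequality for abelian spin systems with positive-definite interactions (the same argument that gives existence of the infinite-volume limit $\langle W_\gamma\rangle_{\beta,\kappa}$, cited from~\cite{flv2020} in the introduction). Concretely, differentiating $\log \mathbb{E}_{N,\beta,\kappa}[W_\gamma]$, or rather $\mathbb{E}_{N,\beta,\kappa}[W_\gamma]$ itself, with respect to $\beta$ produces
\begin{equation*}
    \frac{\partial}{\partial\beta}\mathbb{E}_{N,\beta,\kappa}[W_\gamma] = \sum_{p\in C_2(B_N)}\Bigl(\mathbb{E}_{N,\beta,\kappa}\bigl[W_\gamma\,\tr\rho(d\sigma(p))\bigr] - \mathbb{E}_{N,\beta,\kappa}[W_\gamma]\,\mathbb{E}_{N,\beta,\kappa}\bigl[\tr\rho(d\sigma(p))\bigr]\Bigr),
\end{equation*}
and each summand is a truncated (Ursell) two-point function which is nonnegative by Ginibre; the same computation with $\kappa$ in place of $\beta$ and $\tr\rho(\sigma(e))\rho(d\phi(e))^{-1}$ in place of $\tr\rho(d\sigma(p))$ handles monotonicity in $\kappa$. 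Alternatively — and this is probably cleaner to write — I would avoid derivatives entirely and argue directly: expand $e^{\beta'\sum_p\cdots}/e^{\beta\sum_p\cdots}$ for $\beta'\geq\beta$ as a convergent series with nonnegative coefficients in the monomials above, and apply the Ginibre positivity termwise to the numerator and denominator of the ratio representing $\mathbb{E}_{N,\beta',\kappa}[W_\gamma]-\mathbb{E}_{N,\beta,\kappa}[W_\gamma]$.

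\textbf{Main obstacle.} The one point requiring care is that the Ginibre inequality in the form "$\sum_{\sigma,\phi}(\text{monomial in }\real\rho)\cdot W_\gamma \geq 0$" needs the single-site measures and the character $W_\gamma$ to be compatible — i.e. one should check that $W_\gamma=\rho(\sigma(\gamma))\rho(\phi(\partial\gamma))^{-1}$ is itself (a positive combination of) the relevant characters, and that summing a product of characters over the finite abelian group gives a nonnegative (indeed $\{0,|G|\}$-valued, up to constants) answer. For $G,H$ cyclic this is immediate: $\real\rho(g)=\frac12(\rho(g)+\rho(-g))$ expands into characters with positive coefficients, products of characters are characters, and $\sum_{g}\chi(g)\in\{0,|G|\}$. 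So the "hard part" is really just bookkeeping — organizing the expansion so that the nonnegativity is manifest — rather than any genuine difficulty; I expect the proof to be short, citing Ginibre (e.g. via~\cite{flv2020} or a standard reference) for the termwise positivity and reducing everything to that.
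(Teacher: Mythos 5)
Your proposal matches the paper's own argument: the paper likewise differentiates \( \mathbb{E}_{N,\beta,\kappa}[W_\gamma] \) in \( \beta \) and \( \kappa \), obtaining the truncated correlations \( \sum_p\bigl(\mathbb{E}[W_{\gamma+\partial p}]-\mathbb{E}[W_\gamma]\mathbb{E}[W_{\partial p}]\bigr) \) and \( \sum_e\bigl(\mathbb{E}[W_{\gamma+e}]-\mathbb{E}[W_\gamma]\mathbb{E}[W_{e}]\bigr) \), whose nonnegativity is exactly the Griffiths/Ginibre second inequality you invoke. The only cosmetic difference is that the paper states the positivity as Griffiths' second inequality \( \mathbb{E}[W_{\gamma'+\gamma''}]\geq\mathbb{E}[W_{\gamma'}]\mathbb{E}[W_{\gamma''}] \) rather than unpacking the character expansion as you do.
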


\begin{proof}

	By Griffith's second inequality, for all paths \( \gamma',\gamma'' \in C^1(B_N,\mathbb{Z}), \) we have
	\[
	\mathbb{E}_{N,\beta,\kappa}[ W_{\gamma'+\gamma''} ] \geq \mathbb{E}_{N,\beta,\kappa}[ W_{\gamma'} ] \mathbb{E}_{N,\beta,\kappa}[ W_{\gamma''} ].
	\]
	(For a reference, see, e.g., \cite[Proposition 6.3]{fv2025} for the case \( \kappa=0.\) The proof for \( \kappa>0 \) is analogous.)
	Since 
	\[
		\mathbb{E}_{N,\beta,\kappa}[W_\gamma] = \frac{\sum_{\sigma \in \Omega_1(B_N,G)} \rho(\sigma(\gamma)) e^{\beta \sum \rho(d\sigma(p)) + \kappa \sum \rho(\sigma(e))}}{\sum_{\sigma \in \Omega_1(B_N,G)} e^{\beta \sum \rho(d\sigma(p)) + \kappa \sum \rho(\sigma(e))}},
	\]
	it follows that 
	\[
		\frac{d}{d\beta} \mathbb{E}_{N,\beta,\kappa}[W_\gamma] = 
		\sum_{p} \mathbb{E}_{N,\beta,\kappa}[W_{\gamma+ \partial p}] 
		-
		\mathbb{E}_{N,\beta,\kappa}[W_\gamma]
		\sum_{p} \mathbb{E}_{N,\beta,\kappa}[W_{ \partial p}] \geq 0
	\]
	and
	\[
		\frac{d}{d\kappa} \mathbb{E}_{N,\beta,\kappa}[W_\gamma]
		=
		\sum_e \mathbb{E}_{N,\beta,\kappa}[W_{\gamma+e}]
		-
		\mathbb{E}_{N,\beta,\kappa}[W_{\gamma}]
		\sum_e \mathbb{E}_{N,\beta,\kappa}[W_{e}] \geq 0.
	\]
	This concludes the proof.
\end{proof}

We now give a proof of Theorem~\ref{theorem: general perimeter}.
\begin{proof}[Proof of Theorem~\ref{theorem: general perimeter}]
	We first show that~Theorem~\ref{theorem: general perimeter}\ref{item: general perimeter a} holds. To this end, note that for each \( \kappa >0, \) we have \( \bar \varphi_\kappa>0. \) Consequently, for each \( \omega' \in \Omega_2(B_N,\mathbb{Z}_{m\vee n}), \) we have
	\[
	\prod_{e \in \gamma}\frac{\bar \varphi_{\kappa,m\lor n}\bigl( \delta \omega'(e)-\gamma[e]\bigr)}{\bar \varphi_{\kappa,m\lor n} \bigl(\delta \omega' (e)\bigr)} \geq \biggl( \frac{\min_{j \in \mathbb{Z}_{m \vee n}}\bar \varphi_{\kappa,m\lor n}(j)}{\max_{j \in \mathbb{Z}_{m \vee n}}\bar \varphi_{\kappa,m\lor n}(j)} \biggr)^{|\gamma|}.
	\] 
	Using Proposition~\ref{proposition: high high mn}, it follows that there is  \( b_{\beta,\kappa} > 0 \) such that
	\[
	\langle W_{\gamma}  \rangle_{\beta,\kappa} \geq e^{-b_{\beta,\kappa}|\gamma|}.
	\]  
	For the other direction, we simply note that by definition, we have \( \langle W_{\gamma}\rangle_{\beta,\kappa} \leq 1. \) 
	This completes the proof of~Theorem~\ref{theorem: general perimeter}\ref{item: general perimeter a}.
	
	To see that Theorem~\ref{theorem: general perimeter}\ref{item: general perimeter b} holds, recall first that by Griffith's second inequality, for any paths \( \gamma ,\gamma' \in C^1(\mathbb{Z}^4,\mathbb{Z})\) with finite support, we have 
	\[
		\langle W_{\gamma+\gamma'} \rangle_{\beta,\kappa} \geq \langle W_{\gamma} \rangle_{\beta,\kappa} \langle W_{\gamma'} \rangle_{\beta,\kappa},
	\]
	and hence
	\[
	\frac{\log \langle W_{\gamma + \gamma'}\rangle_{\beta,\kappa}}{|\gamma|+|\gamma'|}  \geq \frac{|\gamma|}{|\gamma|+|\gamma'|} \frac{\log \langle W_\gamma \rangle_{\beta,\kappa}}{|\gamma|} + \frac{|\gamma'|}{|\gamma|+|\gamma'|} \frac{\log \langle W_{\gamma'} \rangle_{\beta,\kappa}}{|\gamma'|}.
	\]
	Now assume that for \( j \geq 1, \) \( \gamma_j \) is a straight line of length \(j.\) Then the above equation implies in particular that the sequence \( {\log \langle W_{\gamma_{j}}\rangle_{\beta,\kappa}}/{|\gamma_j|}, \) \( j \geq 1, \) is superadditive.
	Since, by~Theorem~\ref{theorem: general perimeter}\ref{item: general perimeter a}, \( \log \langle {W_{\gamma_j}} \rangle_{\beta,\kappa}/|\gamma_j|  \) is bounded from above and below, uniformly in \( n, \) it follows that \( \lim_{j \to \infty} \frac{\log \langle W_{\gamma_j} \rangle_{\beta,\kappa}}{|\gamma_j|} \in (0,\infty ) \) exists, which concludes the proof in this case.
	
	The proof for \( \gamma_j \) being a rectangular loop with side lengths \( aj \) and \( bj \) for some \( a,b \in \mathbb{N} \) follows analogously, and is hence omitted here.
	%
	%
	%
	%
\end{proof}

\section{Proofs of Theorem~\ref{theorem: Higgs phase} and Theorem~\ref{theorem: main confinement}}\label{section: proofs of the main results}

In this section, we will prove Theorem~\ref{theorem: Higgs phase} and Theorem~\ref{theorem: main confinement}. The main tool in these proofs will be the cluster expansions of Section~\ref{section: cluster expansions in Higgs phase} and Section~\ref{section: cluster expansions in confinement phase}; hence, we use the notation of those corresponding sections here.
We state and prove both theorems for straight lines only, as the proof in the case of a loop is completely analogous and follows by considering corners instead of endpoints of the path. We further mention that the same strategy can easily be adapted to give analogous results also for, e.g., sequences of paths of the same shape whose side lengths are proportional to \(n. \)

We now introduce three paths that will be useful in the proofs. We will view paths  as elements in \( C^1(\mathbb{Z}^d,\mathbb{Z}), \) often restricted to \( C^1(B_N,\mathbb{Z}) \) in the natural way.
\begin{itemize}
		\item Let \( \gamma_\infty \) be the bi-infinite line placed at the \( x \)-axis.  
		
		\item Let \( \gamma_\infty^+ \) be the restriction of \( \gamma_\infty \) to the positive \( x \)-axis. Without loss of generality we can assume that \( \gamma_n \) has one end-point at the origin and that its support is a subset of the support of \( \gamma_\infty^+. \) We let \( \gamma_\infty^- \) be the restriction of \( \gamma_\infty \) to \(  \support( \gamma_\infty - \gamma_\infty^+ + \gamma_n).\)

		\item Let \( e_0 \) be the unique edge in \( \gamma_\infty^+ \) that has one endpoint at the origin. For \( e \in \gamma_\infty, \) we let \( \tau_e \) be the translation that maps \( e_0 \) to~\( e .\) When \( c \in C^1(B_N,\mathbb{Z}) \) is such that \( \tau_e e' \in B_N \) for all \( e' \in \support c, \) we define \( \tau_e c \) by \( \tau_e c(e') = c(\tau_e e') . \)
	\end{itemize}

\subsection{Proof of Theorem~\ref{theorem: Higgs phase} (the Higgs phase)}

In this section, we give a proof of Theorem~\ref{theorem: Higgs phase}. Throughout this section, we use the notation of Section~\ref{section: cluster expansions in Higgs phase}.

\begin{proof}[Proof of Theorem~\ref{theorem: Higgs phase}] 
    By combining~\eqref{eq: log Z expansion},~\eqref{eq:dec5.1}, and~\eqref{eq: log ZUgamma},  we can write
	\begin{equation}\label{eq: expansion equation}
		\begin{split}
        	&-\log \mathbb{E}^{(U)}_{N,\beta,\kappa}[ W_{\gamma_n} ] = \sum_{\mathcal{S} \in \Xi^{(N)}} \bigl( \Psi_{\beta,\kappa}(\mathcal{S})-\Psi_{\beta,\kappa,\gamma_n}(\mathcal{S})\bigr) = 
        	\sum_{\mathcal{S} \in \Xi^{(N)}} \Psi_{\beta,\kappa}(\mathcal{S})\pigl( 1-\rho \bigl(\mathcal{S}(\gamma_n)\bigr) \pigr) 
        	\\&\qquad= 
        	\sum_{e \in \gamma}\sum_{\mathcal{S} \in \Xi^{(N)}_{1^+,1^+,e}} \frac{\Psi_{\beta,\kappa}(\mathcal{S})\pigl( 1-\rho \bigl(\mathcal{S}(\gamma_n)\bigr) \pigr)}{ |\support \mathcal{S} \cap \support \gamma_n|}.
        \end{split}
    \end{equation}   
    To simplify notation, for a path \( \gamma \in C^1(B_N,\mathbb{Z})\) and a cluster \( \mathcal{S}\in \Xi, \) we let
    \begin{equation*}
    	F_{\beta,\kappa}(\mathcal{S},\gamma) \coloneqq \frac{\Psi_{\beta,\kappa}(\mathcal{S})\pigl( 1-\rho \bigl(\mathcal{S}(\gamma)\bigr) \pigr)}{ |\support \mathcal{S} \cap \support \gamma|}.
    \end{equation*}
    Let \( M>|\gamma_n| \) be such that the distance between \( \support \gamma_n \) and the boundary of \( B_N \) is at least \( M. \) Then
    \begin{align*}
		& -\log \mathbb{E}^{(U)}_{N,\beta,\kappa}[ W_{\gamma_n} ] = \underbrace{\sum_{e \in \gamma_n}\sum_{\mathcal{S} \in \Xi^{(N)}_{1^+,M^+,e}} F_{\beta,\kappa}(\mathcal{S},\gamma_n)}_{ \eqqcolon E_1 }
		+
        \underbrace{
        \sum_{e \in \gamma_n}  \sum_{\mathcal{S} \in \Xi_{1^+,M^-,e}^{(N)}} F_{\beta,\kappa}(\mathcal{S},\gamma_n)  }_{\eqqcolon S_1}.
	\end{align*}
	For any \( e \in \gamma_n, \) we have \( \Xi_{1^+,M^-,e}^{(N)} = \Xi_{1^+,M^-,e}^{(\infty)}, \) 
	and hence
	\begin{align*}
		&S_1 = \sum_{e \in \gamma_n}  \sum_{\mathcal{S} \in \Xi_{1^+,M^-,e}^{(\infty)}} F_{\beta,\kappa}(\mathcal{S},\gamma_n) 
		\\&\qquad=
		\underbrace{\sum_{e \in \gamma_n}  \sum_{\mathcal{S} \in \Xi_{1^+,M^-,e}^{(\infty)}} F_{\beta,\kappa}(\mathcal{S},\gamma_\infty) }_{\eqqcolon S_2}
		+
		\underbrace{\sum_{e \in \gamma_n}  \sum_{\mathcal{S} \in \Xi_{1^+,M^-,e}^{(\infty)}} \Bigl( F_{\beta,\kappa}(\mathcal{S},\gamma_n)  - F_{\beta,\kappa}(\mathcal{S},\gamma_\infty)  \Bigr)}_{\eqqcolon S_3}.
	\end{align*}
	By symmetry, we further have 
	\begin{align*}
		&S_2
		=
		|\gamma_n|\sum_{\substack{\mathcal{S} \in \Xi_{1^+,M^-,e_0}^{(\infty)} }} F_{\beta,\kappa}(\mathcal{S},\gamma_\infty) 
		\\&\qquad=
		|\gamma_n|\sum_{\substack{\mathcal{S} \in \Xi_{1^+,1^+,e_0}^{(\infty)} }} F_{\beta,\kappa}(\mathcal{S},\gamma_\infty) 
		-
		\underbrace{|\gamma_n|\sum_{\substack{\mathcal{S} \in \Xi_{1^+,M^+,e_0}^{(\infty)} }} F_{\beta,\kappa}(\mathcal{S},\gamma_\infty) }_{\eqqcolon E_2}.
	\end{align*} 
	Let
	\begin{equation*}
		a_{\beta,\kappa} \coloneqq \sum_{\mathcal{S} \in \Xi_{1^+,1^+,e_0}^{(\infty)}} F_{\beta,\kappa}(\mathcal{S},\gamma_\infty). 
	\end{equation*}
	By Lemma~\ref{lemma: new upper bound}, \( a_{\beta,\kappa} \) is well defined and finite. Combining the above equations, we obtain
	\begin{align*}
		& -\log \mathbb{E}^{(U)}_{N,\beta,\kappa}[ W_{\gamma_n} ] -a_{\beta,\kappa}|\gamma_n|= E_1-E_2+S_3.
	\end{align*}
	
	We now return to the sum \( S_3. \) 
	To this end, note that if \( \mathcal{S} \in \Xi_{1^+,M^-,e}^{(\infty)}\) and \( \|\mathcal{S}\|_1 < \dist(e,\partial \gamma_n) , \) then \(  \mathcal{S}(\gamma_n) = \mathcal{S}(\gamma_\infty) \) and \( |\support   \mathcal{S} \cap \support \gamma_n| = |\support  \mathcal{S} \cap \support \gamma_\infty|,\) and hence \( F_{\beta,\kappa}(\mathcal{S},\gamma_n) = F_{\beta,\kappa}(\mathcal{S},\gamma_\infty).\) Using these observations, it follows that  
	\begin{align*}
		&S_3 =
		\sum_{e \in \gamma_n} \sum_{\substack{\mathcal{S} \in \Xi^{(\infty)}_{1^+,M^-,e} \mathrlap{\colon}\\  \dist(e,\partial \gamma_n) \leq  \| \mathcal{S}\|_1 }} 
		\Bigl( F_{\beta,\kappa}(\mathcal{S},\gamma_n) - F_{\beta,\kappa}(\mathcal{S},\gamma_\infty)\Bigr).
	\end{align*} 
	Next, we note that since \( |\gamma_n| \geq \dist (e,\partial \gamma_n) \) for all \( e \in \gamma_n, \) we have
	\begin{align*}
		&S_3 = \sum_{e \in \gamma_n} \sum_{\substack{\mathcal{S} \in \Xi^{(\infty)}_{1^+,M^-,e} \mathrlap{\colon}\\ \dist(e,\partial \gamma_n) \leq \| \mathcal{S}\|_1 < |\gamma_n|}}    \pigl( F_{\beta,\kappa}(\mathcal{S},\gamma_n) - F_{\beta,\kappa}(\mathcal{S},\gamma_\infty) \pigr) \qquad (\eqqcolon S_4)
		\\&\qquad\qquad+
		\sum_{e \in \gamma_n} \sum_{\substack{\mathcal{S} \in  \Xi^{(\infty)}_{1^+,M^-,e} \mathrlap{\colon}\\ \| \mathcal{S}\|_1 \geq |\gamma_n|  }}  \pigl( F_{\beta,\kappa}(\mathcal{S},\gamma_n) -F_{\beta,\kappa}(\mathcal{S},\gamma_\infty) \pigr) \qquad (\eqqcolon E_3)
	\end{align*}  
	Now assume that \( \mathcal{S} \in \Xi_{1^+,M^-,e}^{(\infty)},\) is such that \( \dist(e,\partial \gamma_n) \leq  \|\mathcal{S}\|_1 < |\gamma_n|. \) Then we have either  \(  \mathcal{S}(\gamma_n) = \mathcal{S}(\gamma_\infty^+) \) and \( |\support   \mathcal{S} \cap \support \gamma_n| = |\support  \mathcal{S} \cap \support \gamma_\infty^+| \)  or  \(  \mathcal{S}(\gamma_n) = \mathcal{S}(\gamma_\infty^-) \) and \( |\support   \mathcal{S} \cap \support \gamma_\infty^-  | = |\support  \mathcal{S} \cap \support \gamma_\infty^-|. \) Hence either \( F_{\beta,\kappa}(\mathcal{S},\gamma_n) = F_{\beta,\kappa}(\mathcal{S},\gamma_\infty^+)\) or \( F_{\beta,\kappa}(\mathcal{S},\gamma_n)=F_{\beta,\kappa}(\mathcal{S},\gamma_\infty^-) \) holds. 
	By symmetry, it follows that
	\begin{align*}
		&S_4
		=
		2\sum_{e \in \gamma_\infty^+} \sum_{\substack{\mathcal{S} \in \Xi^{(\infty)}_{1^+,M^-,e} \mathrlap{\colon}\\ \dist(e,\partial \gamma_\infty^+) \leq \| \mathcal{S}\|_1<|\gamma_n|}} \pigl( F_{\beta,\kappa}(\mathcal{S},\gamma_\infty^+) -F_{\beta,\kappa}(\mathcal{S},\gamma_\infty) \pigr)
		\\&\qquad=
		2\sum_{e \in \gamma_\infty^+} \sum_{\substack{\mathcal{S} \in \Xi^{(\infty)}_{1^+,1^+,e} \mathrlap{\colon}\\ \dist(e,\partial \gamma_\infty^+) \leq \| \mathcal{S}\|_1<|\gamma_n|}} \pigl( F_{\beta,\kappa}(\mathcal{S},\gamma_\infty^+) -F_{\beta,\kappa}(\mathcal{S},\gamma_\infty) \pigr).
	\end{align*}
	Let
	\begin{align*}
		C_{\beta,\kappa} \coloneqq 2\sum_{e \in \gamma_\infty^+} \sum_{\substack{\mathcal{S} \in \Xi^{(\infty)}_{1^+,1^+,e} \mathrlap{\colon}\\    \dist(e,\partial \gamma_\infty^+) \leq \| \mathcal{S}\|_1 }}  \pigl( F_{\beta,\kappa}(\mathcal{S},\gamma_\infty^+) -F_{\beta,\kappa}(\mathcal{S},\gamma_\infty) \pigr).
	\end{align*}
	By~Lemma~\ref{lemma: new upper bound}, \( C_{\beta,\kappa} \) is well defined and finite. Moreover, we have
	\begin{align*}
		&S_4 = C_{\beta,\kappa} 
		-
		\underbrace{2\sum_{e \in \gamma_\infty^+} \sum_{\substack{\mathcal{S} \in \Xi^{(\infty)}_{1^+,1^+,e} \mathrlap{\colon}\\  \| \mathcal{S}\|_1 \geq \max(|\gamma_n|,\dist(e,\partial \gamma_\infty^+))  }}   \pigl( F_{\beta,\kappa}(\mathcal{S},\gamma_\infty^+) -F_{\beta,\kappa}(\mathcal{S},\gamma_\infty) \pigr) }_{\eqqcolon E_4}.
	\end{align*}
	Combining the above equations, we obtain the upper bound
	\begin{align*}
		&\pigl| -\log \mathbb{E}^{(U)}_{N,\beta,\kappa}[ W_{\gamma_n} ] - a_{\beta,\kappa}|\gamma_n| -C_{\beta,\kappa}\pigr| \leq 
		|E_1| + |E_2| + |E_3| + |E_4|
		\\&\qquad\leq
		\sum_{e \in \gamma_n}\sum_{\mathcal{S} \in \Xi^{(N)}_{1^+,M^+,e}} \bigl| \Psi_{\beta,\kappa}(\mathcal{S})  \bigr|
		+|\gamma_n|\sum_{\substack{\mathcal{S} \in \Xi^{(\infty)}_{1^+,M^+,e_0} }} \bigl|\Psi_{\beta,\kappa}(\mathcal{S})   \bigr|
		\\&\qquad\qquad+  \sum_{e \in \gamma_n} \sum_{\substack{\mathcal{S} \in \Xi^{(\infty)}_{1^+,M^-,e} \mathrlap{\colon}\\  \| \mathcal{S}\|_1 \geq |\gamma_n|}} \bigl|\Psi_{\beta,\kappa}(\mathcal{S})  \bigr|
		+2
		\sum_{e \in \gamma_\infty^+} \sum_{\substack{\mathcal{S} \in \Xi^{(\infty)}_{1^+,1^+,e} \mathrlap{\colon}\\ \| \mathcal{S}\|_1 \geq \max(|\gamma_n|,\dist(e,\partial \gamma_\infty^+) )}} \bigl| \Psi_{\beta,\kappa}(\mathcal{S})     
		\bigr|.
	\end{align*}
	By Lemma~\ref{lemma: new upper bound}, under the assumptions of the theorem, all of these sums are finite. Letting first \( N \) and then \( M \) go to infinity, it follows that
	\begin{align*}
		&\pigl| -\log \langle W_{\gamma_n} \rangle_{\beta,\kappa} - a_{\beta,\kappa}|\gamma_n| -C_{\beta,\kappa}\pigr|  
		\\&\qquad\leq  
		\sum_{e \in \gamma_n} \sum_{\substack{\mathcal{S} \in \Xi^{(\infty)}_{1^+,1^+,e} \mathrlap{\colon}\\  \| \mathcal{S}\|_1 \geq |\gamma_n|}} \bigl|\Psi_{\beta,\kappa}(\mathcal{S})  \bigr|
		+2
		\sum_{e \in \gamma_\infty^+} \sum_{\substack{\mathcal{S} \in \Xi^{(\infty)}_{1^+,1^+,e} \mathrlap{\colon}\\ \| \mathcal{S}\|_1 \geq \max(|\gamma_n|,\dist(e,\partial \gamma_\infty^+) )}} \bigl| \Psi_{\beta,\kappa}(\mathcal{S})      
		\\&\qquad\leq 
		|\gamma_n|\sum_{\substack{\mathcal{S} \in \Xi^{(\infty)}_{1^+,|\gamma_n|^+,e_0} }} \bigl|\Psi_{\beta,\kappa}(\mathcal{S})  \bigr|
		+2
		\sum_{m=1}^\infty  \sum_{\substack{\mathcal{S} \in \Xi^{(\infty)}_{1^+,1^+,e_0} \mathrlap{\colon}\\ \| \mathcal{S}\|_1 \geq \max(|\gamma_n|,m) }} \bigl| \Psi_{\beta,\kappa}(\mathcal{S}) \bigr|    
		\bigr|.
	\end{align*}
	Using Lemma~\ref{lemma: new upper bound}, we can bound the right-hand side of the previous inequality from above by
	\begin{equation*}
		\frac{C_{\varepsilon} |\gamma_n|e^{-4|\gamma_n|(\kappa-\kappa_0^{(\textrm{Higgs})}-\varepsilon) }}{4}
		+
		\frac{C_{\varepsilon}|\gamma_n| e^{-4|\gamma_n|(\kappa-\kappa_0^{(\textrm{Higgs})}-\varepsilon) }}{2}
		+
		   \frac{C_{\varepsilon} e^{-4|\gamma_n| (\kappa-\kappa_0^{(\textrm{Higgs})}-\varepsilon) }}{2(1-e^{-4 (\kappa-\kappa_0^{(\textrm{Higgs})}-\varepsilon) })}.
	\end{equation*} 
	This concludes the proof.
\end{proof}

\subsection{Proof of Theorem~\ref{theorem: main confinement} (the confinement phase)}

In this section, we prove Theorem~\ref{theorem: main confinement}. Throughout this section, we use the notation of Section~\ref{section: cluster expansions in confinement phase}. 
The proof is very similar to the proof of Theorem~\ref{theorem: Higgs phase}, except the polymers are now different, causing a slightly different interaction between polymers and the path \( \gamma_n \) and also different upper bounds.
In the proof, we let
	\begin{equation*}
    	\Xi_{1,j,e}^{(N)}   \coloneqq \bigl\{\{ \eta \} \in \Xi_1^{(N)}\colon   e \in \support \delta\eta \text{ and } |(\support \eta)^+|  = j\bigr\},
	\end{equation*}  
	and define \( \Xi^{(N)}_{1^+,j^+,e}  \) etc.{} as before. 

\begin{proof}[Proof of Theorem~\ref{theorem: main confinement}]
	Recall the definitions of~\( \Psi^\gamma_{\beta,\kappa} \) and~\( \hat Z_{N,\beta,\kappa}[\gamma] \) from~\eqref{eq: Psi confinement} and~\eqref{eq: hat Z confinement} respectively. 
	By Proposition~\ref{proposition: high high mn}, for any \( n \geq 1 \), we have
	\begin{equation*}
		\frac{ Z_{N,\beta,\kappa}[\gamma_n]}{ Z_{N,\beta,\kappa}[0]}
		=
		\frac{\hat Z_{N,\beta,\kappa}[\gamma_n]}{\hat Z_{N,\beta,\kappa}[0]}.
	\end{equation*}  
	Using Lemma~\ref{lemma: upper bound and convergence}, it thus follows that
	\begin{align*}
		&-\log \mathbb{E}_{N,\beta,\kappa}[W_{\gamma_n}]  
		= 
		\log \hat Z_{N,\beta,\kappa}[\gamma_n] - \log \hat Z_{N,\beta,\kappa}[0] 
		\\&\qquad=\sum_{\mathcal{S} \in \Xi^{(N)}} \Psi^{\gamma_n}_{\beta,\kappa}(\mathcal{S}) - \sum_{\mathcal{S} \in \Xi^{(N)}} \Psi^0_{\beta,\kappa}(\mathcal{S})
		= 
		\sum_{\mathcal{S} \in \Xi^{(N)}_{1^+,1^+,e} }  \frac{\Psi^{\gamma_n}_{\beta,\kappa}(\mathcal{S}) - \Psi^0_{\beta,\kappa}(\mathcal{S}) }{ |\support \gamma_n \cap  \support \delta \mathcal{S}|} .
	\end{align*}
	To simplify notation, for a path \( \gamma \) and \( \mathcal{S} \in \Xi, \) we let 
	\begin{equation*}
		G_{\beta,\kappa}(\mathcal{S},\gamma) \coloneqq \frac{\Psi^{\gamma}_{\beta,\kappa}(\mathcal{S}) - \Psi^0_{\beta,\kappa}(\mathcal{S}) }{ |\support \gamma \cap  \support \delta \mathcal{S}|}.
	\end{equation*}
	Since
 	\begin{equation*}
 		\Psi^{\gamma}_{\beta,\kappa}(\mathcal{S})  = \varphi_\kappa(1)^{-2|\support \gamma \cap \support \delta \mathcal{S}|}\Psi^0_{\beta,\kappa}(\mathcal{S}),
 	\end{equation*}
 	the function \( G_{\beta,\kappa}(\mathcal{S},\gamma) \) depends on \( \gamma \) only through \( |\support \gamma \cap \support \delta \mathcal{S}|. \)

	Let \( M > |\gamma_n|\) be such that the distance between \( \support \gamma_n \) and \( \partial B_N \) is at least \( M.\) Then 
	\begin{align*}
		&-\log \mathbb{E}_{N,\beta,\kappa}[W_{\gamma_n}] - |\gamma_n| \log \varphi_\kappa\bigl( 1\bigr) 
		\\&\qquad
		=
		\underbrace{\sum_{e \in \gamma_n} \sum_{\mathcal{S} \in \Xi_{1^+,M^+,e}^{(N)} } G_{\beta,\kappa}(\mathcal{S},\gamma_n)}_{ \eqqcolon E_1}
		+
		\underbrace{\sum_{e \in \gamma_n} \sum_{\substack{\mathcal{S} \in \Xi_{1^+,M^-,e}^{(N)}  
		}} G_{\beta,\kappa}(\mathcal{S},\gamma_n)}_{\eqqcolon S_1}.
	\end{align*}
	Next, note that for any \( e \in \gamma_n, \) we have \( \mathcal{S} \in \Xi_{1^+,M^-,e}^{(N)}=\mathcal{S} \in \Xi_{1^+,M^-,e}^{(\infty)} , \) and hence
	\begin{align*}
	&S_1 =    
	\sum_{e \in \gamma_n} \sum_{\mathcal{S} \in \Xi_{1^+,M^-,e}^{(\infty)}  } G_{\beta,\kappa}(\mathcal{S},\gamma_n) 
	\\&\qquad=
		\underbrace{\sum_{e \in \gamma_n} \sum_{\substack{\mathcal{S} \in \Xi_{1^+,M^-,e}^{(\infty)} }}   G_{\beta,\kappa}(\mathcal{S},\gamma_\infty)}_{\eqqcolon S_2}
		+ 
		\underbrace{\sum_{e \in \gamma_n} \sum_{\substack{\mathcal{S} \in \Xi^{(\infty)}_{1^+,M^-,e}  }} \!\!\!\!  \pigl( G_{\beta,\kappa}(\mathcal{S},\gamma_n)- G_{\beta,\kappa}(\mathcal{S},\gamma_\infty) \pigr)}_{\eqqcolon S_3}. 
	\end{align*}
	Note that by symmetry.
	\begin{align*}
		&S_2 = |\gamma_n|\sum_{\mathcal{S} \in \Xi^{(\infty)}_{1^+,M^-,e_0} } G_{\beta,\kappa}(\mathcal{S},\gamma_\infty)
		\\&\qquad=
		|\gamma_n|\sum_{\mathcal{S} \in \Xi^{(\infty)}_{1^+,1^+,e_0} } G_{\beta,\kappa}(\mathcal{S},\gamma_\infty)
		-
		\underbrace{|\gamma_n|\sum_{\mathcal{S} \in \Xi^{(\infty)}_{1^+,M^+,e_0} } G_{\beta,\kappa}(\mathcal{S},\gamma_\infty) }_{\eqqcolon E_2}.
	\end{align*}
	With this in mind, define
	\[
		a_{\beta,\kappa} \coloneqq \log \varphi_\kappa(1) +  \sum_{\mathcal{S} \in \Xi^{(\infty)}_{1^+,1^+,e_0} } G_{\beta,\kappa}(\mathcal{S},\gamma_\infty).
	\]
	By Lemma~\ref{lemma: general Cbeta cp}, \( a_{\beta,\kappa} \) is well defined and finite.
	Combining the previous equations, we have
 	\begin{align*} 
		-\log \mathbb{E}_{N,\beta,\kappa}[W_{\gamma_n}] - a_{\beta,\kappa} |\gamma|
		=
		E_1-E_2+S_3.
 	\end{align*}  
 	We now return to the sum \( S_3. \) 
	To this end, note that if \( \mathcal{S} \in \Xi_{1^+,M^-,e}^{(\infty)}\) and \( \|\mathcal{S}\|_1 < \dist(e,\partial \gamma_n) , \) then \(  |\support \gamma_n \cap \support \delta \mathcal{S}|=|\support \gamma_\infty \cap \support \delta \mathcal{S}| \),  and hence \( G_{\beta,\kappa}(\mathcal{S},\gamma_n) = G_{\beta,\kappa}(\mathcal{S},\gamma_\infty).\) Using these observations, it follows that 
	\begin{align*}
		&S_3 =
		\sum_{e \in \gamma_n} \sum_{\substack{\mathcal{S} \in \Xi^{(\infty)}_{1^+,M^-,e} \mathrlap{\colon}\\ \| \mathcal{S}\|_1 \geq \dist(e,\partial \gamma_n) }} 
		\Bigl( G_{\beta,\kappa}(\mathcal{S},\gamma_n) - G_{\beta,\kappa}(\mathcal{S},\gamma_\infty)\Bigr).
	\end{align*} 
	Next, we note that
	\begin{align*}
		&S_3 = \sum_{e \in \gamma_n} \sum_{\substack{\mathcal{S} \in \Xi^{(\infty)}_{1^+,M^-,e} \mathrlap{\colon}\\ |\gamma_n| >\| \mathcal{S}\|_1 \geq \dist(e,\partial \gamma_n)  }}    \pigl( G_{\beta,\kappa}(\mathcal{S},\gamma_n) - G_{\beta,\kappa}(\mathcal{S},\gamma_\infty) \pigr) \qquad (\eqqcolon S_4)
		\\&\qquad\qquad+
		\sum_{e \in \gamma_n} \sum_{\substack{\mathcal{S} \in \Xi^{(\infty)}_{1^+,M^-,e}  \mathrlap{\colon}\\ \| \mathcal{S}\|_1 \geq \max(|\gamma_n|,\dist(e,\partial \gamma_n)) }}  \pigl( G_{\beta,\kappa}(\mathcal{S},\gamma_n) -G_{\beta,\kappa}(\mathcal{S},\gamma_\infty) \pigr). \qquad (\eqqcolon E_3)
	\end{align*} 
	Next, observe that if \( \mathcal{S} \in \Xi_{1^+,M^-,e}^{(\infty)},\)  \( \dist(e,\partial \gamma_n) <  \|\mathcal{S}\|_1 < |\gamma_n|, \) then we have either  \(  |\support \gamma_n \cap \support \delta \mathcal{S}|=|\support \gamma_\infty^+ \cap \support \delta \mathcal{S}| \) or  \(  |\support \gamma_n \cap \support \delta \mathcal{S}|=|\support \gamma_\infty^- \cap \support \delta \mathcal{S}| \) and hence either \( G_{\beta,\kappa}(\mathcal{S},\gamma_n) = G_{\beta,\kappa}(\mathcal{S},\gamma_\infty^+)\) or \( G_{\beta,\kappa}(\mathcal{S},\gamma_n)=G_{\beta,\kappa}(\mathcal{S},\gamma_\infty^-). \) 
	By symmetry, it follows that
	\begin{align*}
		&S_4
		=
		2\sum_{e \in \gamma_\infty^+} \sum_{\substack{\mathcal{S} \in \Xi^{(\infty)}_{1^+,M^-,e} \mathrlap{\colon}\\  \dist(e,\partial \gamma_\infty^+) \leq \| \mathcal{S}\|_1 < |\gamma_n|} } \pigl( G_{\beta,\kappa}(\mathcal{S},\gamma_\infty^+) -G_{\beta,\kappa}(\mathcal{S},\gamma_\infty) \pigr).
	\end{align*}
	Let
	\begin{align*}
		C_{\beta,\kappa} \coloneqq 2\sum_{e \in \gamma_\infty^+} \sum_{\substack{\mathcal{S} \in \Xi^{(\infty)}_{1^+,1^+,e} \mathrlap{\colon}\\ \| \mathcal{S}\|_1  \geq \dist(e,\partial \gamma_\infty^+)  }}  \pigl( G_{\beta,\kappa}(\mathcal{S},\gamma_\infty^+) -G_{\beta,\kappa}(\mathcal{S},\gamma_\infty) \pigr).
	\end{align*}
	By~Lemma~\ref{lemma: new upper bound cp}, \( C_{\beta,\kappa} \) is well defined and finite. Moreover, we have
	\begin{align*}
		&S_4 = C_{\beta,\kappa} 
		-
		\underbrace{2\sum_{e \in \gamma_\infty^+} \sum_{\substack{\mathcal{S} \in \Xi^{(\infty)}_{1^+,M^-,e} \mathrlap{\colon}\\ \| \mathcal{S}\|_1 \geq \max(\dist(e,\partial \gamma_\infty^+) ,|\gamma_n|) }}   \pigl( G_{\beta,\kappa}(\mathcal{S},\gamma_\infty^+) -G_{\beta,\kappa}(\mathcal{S},\gamma_\infty) \pigr) }_{\eqqcolon E_4}.
	\end{align*}
	Combining the above equations, we have
 	\begin{align*} 
		-\log \mathbb{E}_{N,\beta,\kappa}[W_{\gamma_n}] - a_{\beta,\kappa} |\gamma_n| - C_{\beta,\kappa}
		=
		E_1-E_2+E_3-E_4+E_5.
 	\end{align*} 
 	Letting first \( N \) and then \( M \) tend to go infinity, by noting that \( E_1 \) and \( E_2 \) and \( E_4 \) tend to zero as \( M \to \infty, \) we thus obtain
 	\begin{align*} 
		&\bigl| -\log \langle W_{\gamma_n}\rangle_{\beta,\kappa} - a_{\beta,\kappa} |\gamma_n| - C_{\beta,\kappa} \bigr|
		\\&\qquad\leq 
		\sum_{e \in \gamma_n} \sum_{\substack{\mathcal{S} \in \Xi^{(\infty)}_{1^+,1^+,e} \\ \| \mathcal{S}\|_1\geq |\gamma_n| }} \!\!\!\!    \pigl( G_{\beta,\kappa}(\mathcal{S},\gamma_n)- G_{\beta,\kappa}(\mathcal{S},\gamma_\infty) \pigr) \qquad (\eqqcolon E_6)
		\\&\qquad\qquad+
		2\sum_{e \in \gamma_\infty^+} \sum_{\substack{\mathcal{S} \in \Xi^{(\infty)}_{1^+,1^+,e} \\ \| \mathcal{S}\|_1 \geq |\gamma_n| }} \!\!\!\!   \pigl( G_{\beta,\kappa}(\mathcal{S},\gamma_\infty^+) -G_{\beta,\kappa}(\mathcal{S},\gamma_\infty) \pigr) \qquad  (\eqqcolon E_7) .
 	\end{align*} 
 	 	To complete the proof, we now upper bound the right-hand side of the previous equation. This this end, we note that since \( \varphi_\kappa(1) < 1 , \) we have
	\begin{align*}
		|E_6| \leq |\gamma_n | \sum_{\substack{\mathcal{S} \in \Xi^{(\infty)}_{1^+,1^+,e_0} \\ \| \mathcal{S}\|_1\geq |\gamma_n| }} \!\!\!\! \bigl|\Psi_{\beta,\kappa}^{\gamma_n} (\mathcal{S}) \bigr|  
		+
		|\gamma_n | \sum_{\substack{\mathcal{S} \in \Xi^{(\infty)}_{1^+,1^+,e_0} \\ \| \mathcal{S}\|_1\geq |\gamma_n| }} \!\!\!\!  \bigl|\Psi_{\beta,\kappa}^{\gamma_\infty} (\mathcal{S}) \bigr| .
	\end{align*}
	Noting that
	\[
	\Xi_{1^+,k^+,e_0} \subseteq \bigcup_{p \in \partial e_0} \Xi_{1^+,k^+,p}
	\]
	and using that \( |\partial e_0| = 2(d-1), \)
	it follows from Lemma~\ref{lemma: new upper bound cp} that
	\begin{align*}
		|E_6| \leq 4  
		C_{\varepsilon,n}^{(2)}(d-1) |\gamma_n |\Bigl(\frac{\varphi_{\beta}(1)}{\varphi_{\beta+\varepsilon}(1)}\Bigr)^{|\gamma_n|} 
	\end{align*}
	which tends to zero exponentially fast in \( |\gamma_n|. \)
	
	Next, we note that if \( e \in \gamma_\infty^+ \) and \( \mathcal{S} \in \Xi^{(\infty)}_{1^+,1^+,e} \) is such that \( |\support \delta\mathcal{S} \cap \support \gamma_\infty| \neq |\support \delta\mathcal{S} \cap \support \gamma_\infty^+|, \) then we must have \( \|\mathcal{S}\|_1 \geq \dist(e,\partial \gamma_\infty^+). \) Noting that for each \( j \in \{ 0,1, \dots \} , \) there is a unique edge \( e \in \gamma_\infty^+ \) with \( \dist(e,\partial \gamma_\infty^+) = j, \) we obtain
	\begin{align*}
		&|E_7| \leq 2  \sum_{e \in \gamma_\infty^+ } \sum_{\substack{\mathcal{S} \in \Xi^{(\infty)}_{1^+,1^+,e} \\ \| \mathcal{S}\|_1 \geq \max(|\gamma_n|,\dist(e,\partial \gamma_\infty^+)) }} \!\!\!\! \Bigl(\bigl|\Psi_{\beta,\kappa}^{\gamma_\infty^+}(\mathcal{S}) \bigr| + \bigl|\Psi_{\beta,\kappa}^{\gamma_\infty}(\mathcal{S})  \bigr| \Bigr)
		\\&\qquad=
		2\sum_{j=0}^\infty
		\sum_{\substack{\mathcal{S} \in \Xi^{(\infty)}_{1^+,1^+,e_0} \\ \| \mathcal{S}\|_1 \geq \max(|\gamma_n|,j) }} \!\!\!\! \Bigl( \bigl|\Psi_{\beta,\kappa}^{\gamma_\infty^+}(\mathcal{S}) \bigr| + \bigl|\Psi_{\beta,\kappa}^{\gamma_\infty}(\mathcal{S})  \bigr| \Bigr).
	\end{align*}
	Again using Lemma~\ref{lemma: new upper bound cp}, it follows that
	\begin{align*}
		&|E_7| \leq 
		16(d-1)\sum_{j=0}^\infty
		\Bigl(\frac{\varphi_{\beta}(1)}{\varphi_{\beta+\varepsilon}(1)}\Bigr)^{\max(|\gamma_n|,j)} 
		\\&\qquad\leq 
		16(d-1)C_{\varepsilon,n}^{(2)}|\gamma_n|
		\Bigl(\frac{\varphi_{\beta}(1)}{\varphi_{\beta+\varepsilon}(1)}\Bigr)^{|\gamma_n|} 
		+
		\frac{16(d-1)C_{\varepsilon,n}^{(2)}\Bigl(\frac{\varphi_{\beta}(1)}{\varphi_{\beta+\varepsilon}(1)}\Bigr)^{|\gamma_n|}}{1-\Bigl(\frac{\varphi_{\beta}(1)}{\varphi_{\beta+\varepsilon}(1)}\Bigr)}.
	\end{align*}
	This completes the proof.
\end{proof}

\section*{Funding and conflicts of interest}

The author acknowledges support from the Swedish Research Council, grant number 2024-04744. There is no conflict of interest.

\section*{Data availability statement}
The paper has no associated data.

\cleardoublepage

\appendix
\section{Extensions of Theorems~\ref{theorem: general perimeter}, \ref{theorem: Higgs phase}, and~\ref{theorem: main confinement} to general finite abelian groups}\label{appendix: general theory}

 In this appendix, we outline how the proofs of Theorems~\ref{theorem: general perimeter}, \ref{theorem: Higgs phase}, and~\ref{theorem: main confinement} can be extended to finite abelian groups.  To this end, we first note that the starting point for the proofs of both Theorem~\ref{theorem: general perimeter} and Theorem~\ref{theorem: main confinement} was the high temperature expansions described in Sections~\ref{sec: high temperature ZmZn} and~\ref{sec: high temperature Z2}. The main tool for these expansions is the Fourier expansions for \( \mathbb{Z}_n \), which also exist for finite abelian groups. With these expansions at hand, all other proofs in the paper extend immediately to the finite abelian case, with the caveat that, in particular, constants and lower bounds are less concrete. The crucial property we use for most proofs is that the edges that are assigned a spin \( \sigma_e \) with \(  \tr \rho(\sigma_e)) \neq 1 \) will form clusters whose cardinality is very unlikely to be large.

 For completeness, we now describe the high temperature expansion for a general finite abelian group \( G\) in the case \( G=H , \) and mention how it extends the proof of Theorem~\ref{theorem: general perimeter} to this more general case.
 To this end, assume that \( G  \) is a finite and abelian group.  Let $\widehat G = \mathrm{Hom}(G,U(1))$ denote the corresponding  dual group of characters.
  Since $G$ is finite and abelian, the characters form an orthonormal basis of functions on $G$. Hence every function $f \colon G\to\mathbb C$ admits a Fourier expansion 
	\[
		f(g)
		=
		\sum_{\chi\in\widehat G} \widehat f_a(\chi)\chi(g),
	\]
	where, 
	 	\[
		\widehat f(\chi)
		=
		\frac{1}{|G|}
		\sum_{g\in G}
		f(g) \overline{\chi(g)}.
		\]
 	We note that~\eqref{eq: first Fourier expansion} is exactly such an expansion in the special case \( G = \mathbb{Z}^n. \)

Using this Fourier expansion, the following result is the general version of Proposition~\ref{proposition: high high mn} in the special case \( G=H. \)

\begin{proposition}\label{proposition: hte for general groups}
	Let \( G=H\) be a finite abelian group, and let \( \rho_G = \rho_H = \rho \) be a unitary representation of \( G. \) Further, let $\widehat G = \mathrm{Hom}(G,U(1))$ denote the corresponding dual group of characters. For \( g \in G \) and \( a >0, \) let \( f_a \colon G \to \mathbb{C}\) be the function \( g \mapsto e^{2a \real\tr \rho(g)} . \)
	Then
		\begin{align}
	Z^{(U)}_{\beta,\kappa,N}[\gamma] 
	&=
		|G|^{|C_1(B_N)^+|} \!\!\!\!\!\!\!\!
		\sum_{\substack{(\chi_p) \in {\widehat G}^{C_2(B_N)^+}}}
		\prod_{e \in \support \gamma} \frac{\widehat f_\kappa(\chi_{e,(\chi_p),\gamma} )  }{\widehat f_\kappa(\chi_{e,(\chi_p),0} )  }
		\\&\qquad\qquad\qquad\qquad\qquad\qquad \cdot \prod_{e \in C_1(B_N)^+} \widehat f_\kappa(\chi_{e,(\chi_p),0}  )  \prod_{p \in C_2(B_N)^+} \widehat f_\beta(\chi_p) ,\label{eq: gen fourier line 2}
	\end{align}
	where 
	\begin{align*}
		\chi_{e,(\chi_p),\gamma} (g) \coloneqq \overline{(\tr \rho(g))^{\gamma[e]} \prod_{p \in \support \hat \partial e}\chi_{p}(\partial p[e] g)   \Bigr)}.
	\end{align*} 
\end{proposition}

Note that since the function \( f_a \) in Proposition~\ref{proposition: hte for general groups} is strictly positive definite for every \( a > 0, \) all its Fourier coefficients are strictly non-negative. This allows us to think of the terms in~\eqref{eq: gen fourier line 2} as the weights of a probability measure on \( (\hat G)^{C_2(B_N)^+}.\) Using this observation, it immediately follows that
\begin{equation*}
	\mathbb{E}_{\beta,\kappa,N}[W_\gamma] \geq \inf_{(\chi_p) \in \hat G^{C_2(B_N)^+}} \prod_{e \in \support \gamma} \frac{\widehat f_\kappa(\chi_{e,(\chi_p),\gamma} )  }{\widehat f_\kappa(\chi_{e,(\chi_p),0} )  } \geq 
	\biggl( \inf_{(\chi_p) \in \hat G^{C_2(B_N)^+}}  \frac{\widehat f_\kappa(\chi_{e,(\chi_p),1 \cdot e} )  }{\widehat f_\kappa(\chi_{e,(\chi_p),0} )  }  \biggr)^{|\gamma|}
	,
\end{equation*}
which analogous to the lower bound in Theorem~\ref{theorem: general perimeter}. The rest of the proof of Theorem~\ref{theorem: general perimeter} in this more general setting is completely analogous to the proof for the case \( G = \mathbb{Z}_m \) and \( H = \mathbb{Z}_n, \) and is thus omitted here.

We now proceed to the proof of Proposition~\ref{proposition: hte for general groups}.

\begin{proof}[Proof of Proposition~\ref{proposition: hte for general groups}]
	We have
	\begin{align*}
		Z^{(U)}_{\beta,\kappa,N}[\gamma] = &\sum_{\sigma \in \Omega_1(B_N,G)} \tr \rho(\sigma(\gamma)) 
		\\&\qquad\cdot\prod_{e \in C_1(B_N)^+} \sum_{\chi_e\in\widehat G} \widehat f_\kappa(\chi_e)\chi_e(\sigma(e)) \prod_{p \in C_2(B_N)^+} \sum_{\chi_p\in\widehat G} \widehat f_\beta(\chi_p)\chi_p(d\sigma(p))
		\\&=
		\sum_{(\chi_e) \in {\widehat G}^{C_1(B_N)^+}} \sum_{(\chi_p) \in {\widehat G}^{C_2(B_N)^+}}
		\prod_{e \in C_1(B_N)} \widehat f_\kappa(\chi_e)  \prod_{p \in C_2(B_N)} \widehat f_\beta(\chi_p) 
		\\&\qquad\cdot \sum_{\sigma \in \Omega_1(B_N,G)} \tr \rho(\sigma(\gamma)) 
		\prod_{e \in C_1(B_N)^+} \chi_e(\sigma(e)) \prod_{p \in C_2(B_N)^+}  \chi_p(d\sigma(p)).
	\end{align*}
	Since characters are multiplicative, it follows that  \( \rho_p(d\sigma(p)) = \prod_{e \in \partial p} \rho_p(\sigma(e)) \) for any \( \sigma \in \Omega_1(B_N,G) \), \( e \in C_1(B_N) \) and \( p \in C_2(B_N), \) and we thus have
	\begin{align*}
		&
		\sum_{\sigma \in \Omega_1(B_N,G)} \tr \rho(\sigma(\gamma)) 
		\prod_{e \in C_1(B_N)^+} \chi_e(\sigma(e)) \prod_{p \in C_2(B_N)^+}  \chi_p(d\sigma(p))
		\\&\qquad=
		\sum_{\sigma \in \Omega_1(B_N,G)} \tr \rho(\sigma(\gamma)) 
		\prod_{e \in C_1(B_N)^+} \chi_e(\sigma(e)) \prod_{p \in C_2(B_N)^+} \prod_{e \in \partial p} \chi_p(\sigma(e)) 
		\\&\qquad= 
		\prod_{e \in C_1(B_N)^+} \sum_{g \in G} (\tr \rho(g))^{\gamma[e]} 
		\chi_e(g)  \prod_{p \in \support \hat \partial e}\chi_{p}(\partial p[e] g)
	\end{align*}
	Noting that for finite abelian groups, we have
	\[
		\sum_{g\in G}\chi(g) = |G| \cdot \mathbf{1}(\chi=1),
	\]
	that the product of two characters is again a character, and that \( g \mapsto \tr \rho (g) \) is a character, we obtain
	\begin{align*}
	Z^{(U)}_{\beta,\kappa,N}[\gamma] &= |G|^{|C_1(B_N)^+|}
		\sum_{\substack{(\chi_e) \in {\widehat G}^{C_1(B_N)^+} \\ (\chi_p) \in {\widehat G}^{C_2(B_N)^+}}}
		\prod_{e \in C_1(B_N)} \widehat f_\kappa(\chi_e)  \prod_{p \in C_2(B_N)} \widehat f_\beta(\chi_p) 
		\\&\qquad\cdot \prod_{e \in C_1(B_N)^+} \mathbf{1} \Bigl( \forall e \in C_1(B_N)^+ \colon   (\tr \rho(g))^{\gamma[e]} 
		\chi_e(g)  \prod_{p \in \support \hat \partial e}\chi_{p}(\partial p[e] g) = 1 \Bigr)
		\\&= 
		|G|^{|C_1(B_N)^+|}
		\sum_{\substack{(\chi_p) \in {\widehat G}^{C_2(B_N)^+}}}
		\prod_{e \in C_1(B_N)} \widehat f_\kappa(\chi_{e,(\chi_p),\gamma} (g) )  \prod_{p \in C_2(B_N)} \widehat f_\beta(\chi_p) .
	\end{align*}
	where 
	\begin{align*}
		\chi_{e,(\chi_p),\gamma} (g) \coloneqq \overline{(\tr \rho(g))^{\gamma[e]} \prod_{p \in \support \hat \partial e}\chi_{p}(\partial p[e] g)   \Bigr)}.
	\end{align*}
	This concludes the proof.
\end{proof}

\end{document}